\numberwithin{equation}{section}
\subjclass[2010]{Primary: 11G05, Secondary: 14G05, 14G25}
\newtheorem{theorem}{Theorem}[section]
\newtheorem{corollary}{Corollary}[theorem]
\newtheorem{lemma}[theorem]{Lemma}
\newtheorem{prop}[theorem]{Proposition}
\newtheorem{defn}[theorem]{Definition}
\newtheorem*{remark}{Remark}
\newcommand{\Z}{\mathbb{Z}}
\newcommand{\Zi}{\mathbb{Z}[i]}
\newcommand{\Q}{\mathbb{Q}}
\newcommand{\Qi}{\mathbb{Q}[i]}
\newcommand{\R}{\mathbb{R}}
\newcommand{\bp}{\textbf{p}}
\begin{document}

\thanks{This paper is a culmination of a year of work with many people. I would like to thank Stefan Patrikis of the University of Utah Department of Mathematics for guiding me through this research and for cultivating my love of mathematics. My weekly discussions with him on of a variety of subtopics in number theory have been insightful and engaging. I also thank Sean McAfee, Christian Klevdal, Marcus Robinson, and Allechar Serrano L\'opez for guiding me through the beginnings of number theory at the Summer Mathematics Program for High School Students at the University of Utah. I'd like to thank various undergraduate and graduate students for reviewing this manuscript. I would especially like to thank my brother, who recently passed away, for inspiring me to study mathematics. He pushed me to go beyond what I was learning in the classroom. I am extremely grateful that I have done so today. When asked the practical applications of his mathematical work, my brother replied, "That's a silly question. It's beautiful." This is the spirit I will carry with me for the rest of my life, and I dedicate this paper to him.}

\title{The Frequency of Elliptic Curves Over $\mathbb{Q}[i]$ With Fixed Torsion}
\author{Alan Zhao}
\date{August 2019}
\email{asz2115@columbia.edu}

\maketitle

\begin{abstract}
Mazur\textsc{\char13}s Theorem states that there are precisely 15 possibilities for the torsion subgroup of an elliptic curve defined over the rational numbers. It was previously shown by Harron and Snowden that the number of isomorphism classes of elliptic curves of height up to $X$ that have a specific torsion subgroup $G$ is on the order of $X^{1/{d(G)}}$, for some positive $d(G)$ depending on $G$. We compute $d(G)$ for these groups over $\Qi$. Furthermore, in a collection of recent papers it was proven that there are 9 more possibilities for the torsion subgroup in the base field $\Qi$. We compute the value of $d(G)$ for these new groups. 
\end{abstract}
\tableofcontents

\section{Background on Elliptic Curves}
Before stating the objectives of this paper, I'd like to provide the reader with some background on elliptic curves. Let $K$ be a field. For the purposes of our work, an elliptic curve $E/K$ is the projective algebraic curve associated to a Weierstrass equation as in (1.2), where $a,b \in K$. We assume $\text{char}(K) \neq 2, 3$, and so by affine transformations we may reduce the representation of an elliptic curve $E$ to
\begin{equation}
    E_{aff}: y^2 = x^3 + ax + b
    \label{WEaffine}
\end{equation}
which yields the projective homogeneous representation
\begin{equation}
    E_{proj}: Y^2Z = X^3 + aXZ^2 + bZ^3.
    \label{WEprojective}
\end{equation}
We assume $4a^3+27b^2 \neq 0$, which guarantees a non-singular elliptic curve. Finally, define $\mathcal{O} = [0, 1, 0]$ to be the "point at infinity" on $E$.

\begin{defn}
Denote by $E(K)$ the set of $K$-rational points on $E$.
\end{defn}

\subsection{The Group Law on an Elliptic Curve}
We will now define the group law $\oplus$ on $E(K)$. Let $P, Q \in E(K)$, where $P$ and $Q$ are not necessarily distinct. By B\'ezout's Theorem, the line $\ell_{proj}$ through $P$ and $Q$ must intersect $E$ exactly three times, counting multiplicity. Therefore, $\ell_{proj}$ must intersect $E$ at precisely one more point. Denote this point as $R$.

%Assume first that $P$ and $Q$ do not lie on the line $Z = 0$ so we can write $P = [P_x, P_y, 1]$ and $Q = [Q_x, Q_y, 1]$. Denote the line that passes through both points (or if $P = Q$, the line tangent to $P$) by $\ell_{aff}: y = mx+c$ where $m, c \in K$. Define $\ell_{proj}: Y = mX + CZ$. Substituting this value into ($\ref{WEprojective}$) yields
%\begin{equation}
 %   X^3 - m^2X^2Z+XZ^2(a-2mc)+(b-c^2)Z^3 = 0.
 %   \label{cubic}
%\end{equation}
%When $Z = 1$, the sum of the roots $X$ of ($\ref{cubic}$) equals $m^2 \in K$ in the plane $Z = 1$. We know two of these roots are the $x$-coordinates of $P$ and $Q$ (or, where $P = Q$, the $x$-coordinate of $P$ is a root of multiplicity 2), and so we can easily solve for the third point of intersection of $\ell_{proj}$ with $E_{proj}$. Denote this point as $R$.

\begin{defn}
Let $-R = R \oplus \mathcal{O}$ and define $P \oplus Q = -R$. 

%to be the third point of intersection between the line connecting $R$ and $\mathcal{O}$ and $E_{proj}$.
\end{defn}
From the discussion so far, $E(K)$ is closed under $\oplus$,  $\mathcal{O}$ is the identity element, and $P \oplus -P = \mathcal{O}$, so every element has an inverse. Commutativity follows from the fact that two points define a line. It remains to prove associativity, which follows from a more involved application of Bezout's Theorem (see \cite{IK}).
%\begin{theorem}
%The group law $\oplus$ on $E$ is associative.
%\end{theorem}
%\begin{proof}
%See \cite{IK}.
%\end{proof}

Thus we have that $E(K)$ is an abelian group. The Mordell-Weil Theorem asserts that when $K$ is a number field, $E(K)$ is finitely generated and thus is isomorphic to $\Z^r \times E(K)_{tors}$ for some positive integer $r$ and finite subgroup $E(K)_{tors}$ (called the ``torsion subgroup"). This subgroup will be central to the counting problem of \S \S 2-4.

Since $\Qi$ is a number field, we may apply all the theory developed in this section to an elliptic curve $E/\Qi$. We now proceed to count elliptic curves $E$ that have a fixed torsion subgroup.

\section{Statement of the Main Theorem}
The goal of this paper will be to extend the results of Harron and Snowden's paper, which gives the relative frequencies for which isomorphism classes of elliptic curves over $\Q$ with prescribed torsion subgroup $G$ will appear up to a given height. We generalize these results to elliptic curves over $\Qi$.

We begin with the following definitions and notation:
\begin{itemize}
    \item $N(z)$ denotes the norm of $z \in \Qi $.
    \item $X \in \R^+$.
    \item $p$ denotes a prime of $\Z$, and $\bp$ denotes a prime of $\Zi$.
    \item A Weierstrass representation $E:y^2 = x^3 + Ax + B$ ($A, B \in \Zi $) of an elliptic curve $E/\Qi$ is \textit{minimal} if and only if $\gcd(A^3, B^2)$ is not divisible by any 12th power of a non-unit in $\Zi$.
    \item The \textit{height} of the representation $E:y^2 = x^3 + Ax + B$ is defined as $\max(N(A)^3, N(B)^2)$.
\end{itemize}
Any elliptic curve $E/\Qi$ is isomorphic to a curve represented by a minimal Weierstrass equation
\begin{equation}
    E : y^2 = x^3 + Ax + B
\end{equation}
because $\Zi$ is a unique factorization domain, and so if $\bp^{12} \mid \gcd(A^3, B^2)$, then $\bp^4 \mid A$ and $\bp^6 \mid B$, and by a change of variable $E$ is also isomorphic to $y^2 = x^3 + \bp^{-4}Ax+\bp^{-6}Bx$. Furthermore, note that one may make the change of variable $(x,y) \to (x/i^2, y/i^3)$ on $E$. This yields the elliptic curve $E_1:y^2=x^3+Ax - B$, with $E \simeq E_1$. Thus, for $B = 0$ there will be one minimal Weierstrass representation, and for $B \neq 0$ there will be two isomorphic minimal Weierstrass representations. 

%However, because we compute order using exponents, we may ignore this observation. See $\S 2.1$ for explicit computations that illustrate this. 

\begin{defn}
Let $N_G(X)$ be the number of isomorphism classes of elliptic curves of height less than $X$ with torsion subgroup isomorphic to $G$. 
\end{defn}
\begin{remark}
This quantity must be finite because there are only finitely many elements in $\Zi$ that have a fixed integral norm.
\end{remark}
In \cite{RHAS} it was shown that for each group $G$ in Mazur's theorem, there is an explicit constant $d(G)$ such that elliptic curves over $\mathbb{Q}$ with torsion subgroup $G$ satisfy the asymptotic
\begin{equation}
    \frac{1}{d(G)} =  \lim_{X \to \infty} \frac{\text{log }N_G(X)}{\text{log } X}
\end{equation}
\cite{RHAS}. We first partially generalize the methods used in Harron and Snowden's paper to $\Qi$. The precise possibilities for new torsion subgroups over $\Qi$ are given by the following theorem:
\begin{theorem}
(Kenku, Momose, Kamienny) \cite{KM}\cite{KUMO} Let $E$ be an elliptic curve over $ \mathbb{Q}[i]$. Then, $E(\Qi)_{tors}$ must be isomorphic to one of the following groups:
\begin{itemize}
\item $\mathbb{Z}/M\mathbb{Z}, 1 \leq M \leq 16$ or $ M = 18$
\item $\mathbb{Z}/2\mathbb{Z} \times \mathbb{Z}/2M\mathbb{Z}, 1 \leq M \leq 6 $
\item $\mathbb{Z}/4\mathbb{Z} \times \mathbb{Z}/4\mathbb{Z}$.
\end{itemize}
\end{theorem}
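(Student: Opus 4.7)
The plan is to approach this through the theory of modular curves. Recall that giving an elliptic curve $E/K$ together with a $K$-rational point of order $N$ is equivalent (away from the cusps) to giving a non-cuspidal $K$-rational point on the modular curve $X_1(N)$; similarly pairs with full level structure correspond to points on $X_1(M,N)$. So the statement reduces to showing (a) $X_1(N)(\Qi)$ contains only cusps once $N$ exceeds the listed values, and (b) for each group $G$ in the list, one can actually exhibit (or at least prove the existence of) a non-cuspidal $\Qi$-point on the corresponding moduli space.

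Step one is to bound $N$. The first reduction is to Kamienny's theorem, which says that for any elliptic curve over a quadratic field the prime divisors of $|E(K)_{\mathrm{tors}}|$ lie in $\{2,3,5,7,11,13\}$; this already collapses the problem to finitely many candidate torsion structures. The sharp bound for quadratic fields (Kamienny--Mazur) then says $E(K)_{\mathrm{tors}}$ has exponent dividing an explicit integer. Step two is the formal immersion technique of Mazur: for each remaining prime power $p^k$ one wants to show that the induced map $X_1(p^k)^{(2)} \to J_1(p^k)$ from the symmetric square (which parametrises $\Qi$-points via Galois conjugation) is a formal immersion along a suitable cusp, and that $J_1(p^k)(\mathbb{Q})$ is finite. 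Combined with a reduction-modulo-good-prime argument, this forces any $\Qi$-point to coincide with a cusp, ruling out $\mathbb{Z}/17\mathbb{Z}$, $\mathbb{Z}/19\mathbb{Z}$, $\mathbb{Z}/N\mathbb{Z}$ for $N \geq 20$ not in the list, and the forbidden $(2,2M)$ with $M \geq 7$, as well as $(4,4k)$ with $k \geq 2$.

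Step three handles the groups appearing in the list. For each $G$ in the statement one must produce a one-parameter family (or at least a single curve) realising that torsion over $\Qi$; equivalently one exhibits a genus-zero component of $X_1(G)_{\Qi}$ with a $\Qi$-rational point, or in the harder cases one finds explicit $\Qi$-points on the (positive-genus) modular curve. The genuinely new groups over $\Qi$ (those absent from Mazur's list, e.g.\ $\mathbb{Z}/16\mathbb{Z}$, $\mathbb{Z}/(2)\times\mathbb{Z}/(10)$, $\mathbb{Z}/(2)\times\mathbb{Z}/(12)$, and $\mathbb{Z}/(4)\times\mathbb{Z}/(4)$) require a small case-by-case analysis of the $\Qi$-points of the corresponding modular curves. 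The hardest step by far is the formal immersion / Eisenstein quotient input in step two: one needs fine control of the rational Mordell--Weil group of $J_1(N)$ (or a well-chosen quotient) for each borderline $N$, and for some $N$ this requires replacing $J_1(N)$ by the winding quotient or by a carefully chosen isogeny factor to guarantee finiteness of $\mathbb{Q}$-points, which is the technically deepest ingredient and the main obstacle to a self-contained proof.
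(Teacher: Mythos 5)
The paper does not prove this theorem at all: it is stated as a cited result, attributed to Kenku--Momose and Kamienny via the references \cite{KM} and \cite{KUMO}, and is used throughout as a black box. So there is no ``paper proof'' against which to compare your sketch step by step; what you have done is outline the strategy of the cited literature, which is the right thing to attempt.

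Your outline captures the main architecture correctly: the translation into non-cuspidal $\Qi$-points on $X_1(N)$ and $X_1(M,N)$, Kamienny's bound on the primes that can divide torsion over a quadratic field, the symmetric-square formal immersion criterion $X_1(N)^{(2)} \to J_1(N)$ adapted from Mazur, the need to pass to a quotient of $J_1(N)$ (winding quotient or an Eisenstein factor) with finite Mordell--Weil group over $\Q$, and finally the constructive side of exhibiting $\Qi$-points for every group on the list. Two things are worth flagging. First, you never invoke the Weil-pairing constraint: a subgroup $\Z/m\Z \times \Z/n\Z$ with $m \mid n$ forces $\mu_m \subset K$, which over $\Qi$ is exactly what permits $\Z/4\Z \times \Z/4\Z$ (since $i \in \Qi$) while excluding $\Z/3\Z \times \Z/3M\Z$ (since $\zeta_3 \notin \Qi$) \emph{before} any modular-curve machinery is brought in; this cheap filter is an essential first step and explains the shape of the list for this particular field. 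Second, there is a subtle scope issue: the Kenku--Momose theorem gives the list of torsion subgroups that can occur over \emph{some} quadratic field, whereas the statement here is field-specific. Showing that every group on the list actually occurs over $\Qi$ (your ``step three'') is genuinely separate work, settled by later authors (Najman and others) by exhibiting $\Qi$-points on the relevant modular curves; it does not follow automatically from the uniform quadratic-field bound. Modulo those two caveats, your sketch is a fair summary of the deep input the paper is importing by citation.
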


Finally, we note that for every group $G$ in Theorem 2.3 (except for $\Z/2\Z \times \Z/2\Z$, which is dealt with in \S 5), there exists a universal family of elliptic curves $\mathcal{E}$ equipped with a subgroup isomorphic to $G$ \cite{FPR}. The case of $G = \Z/2\Z \times \Z/2\Z$ is exceptional and will be dealt with in \S 5. We may parameterize the coefficients of the Weierstrass forms with two functions. For the groups in Theorem 2.2, these functions are polynomials in one variable or two variables. For a fixed $G$ we have the universal family $\mathcal{E}_t:y^2 = x^3+f(t)x+g(t)$, or, for the two variable case, $\mathcal{E}_{s,t}:y^2 = x^3+f(s, t)x+g(s,t)$ (where the points $(s,t) \in \Qi^2$ lie on a curve $C \hookrightarrow \mathbb{P}^2$) such that $G \subseteq \mathcal{E}_t(\Qi)_{tors}$ or $G \subseteq \mathcal{E}_{s,t}(\Qi)_{tors}$. And any $E$ such that $G \subseteq E(\Qi)_{tors}$ is isomorphic to some curve of these two forms.

Summarizing the results of \cite{FPR}, the three cases that will be dealt with in \S 4 are:
\begin{itemize}
    \item Case 1: $G = \Z/M\Z, \Z/2\Z \times \Z/2K\Z, \Z/4\Z \times \Z/4\Z$, $M \in [4,10]$, $M=12$, $2 \leq K \leq 4$, is parameterized by $\mathcal{E}_t$ and $t \in \Qi$.
    \item Case 2: $G = \Z/M\Z$, $M = 13, 16, 18$, is parameterized by $\mathcal{E}_{s,t}$ with $(s,t) \in C(\Qi)$, where $C$ is a plane curve of genus $>1$.
    \item Case 3: $G = \Z/M\Z, \Z/2\Z \times \Z/10\Z, \Z/2\Z \times \Z/12\Z $, $M = 11, 14, 15 $, is parameterized by $\mathcal{E}_{s, t}$ with $(s,t) \in C(\Qi)$, where $C$ has genus 1.
\end{itemize}

Our main theorem is the following, generalizing \cite[Theorem~1.1]{RHAS}:
\begin{theorem}
For torsion subgroups $G \neq \Z/M\Z$ ($M = 1, 2, 3$), we obtain values of $d(G)$ given by the table below.
\begin{table}[H]
\centering
  \begin{tabular}{ | l | l | l | l | l |
   l |}
    \hline
    $G$ & $ d(G) $ & $G$ & $d(G)$ & $G$ & $d(G)$ \\ \hline
    $\Z/4\Z$ & 4 &  $\Z/11\Z$ & $+\infty$ & $\Z/2\Z \times \Z/2\Z $ & 3 \\ \hline
    $\Z/5\Z $ & 6 &  $\Z/12\Z $ & 24 & $\Z/2\Z \times \Z/4\Z $ & 6 \\ \hline
    $\Z/6\Z$ & 6 &  $\Z/13\Z$ & $+\infty$ & $\Z/2\Z \times \Z/6\Z $ & 12 \\ \hline
    $\Z/7\Z$ & 12 &  $\Z/14\Z$ & $+\infty$ & $\Z/2\Z \times \Z/8\Z $ & 24 \\ \hline
    $\Z/8\Z$ & 12 &  $\Z/15\Z$ & $+\infty$ & $\Z/2\Z \times \Z/10\Z $ & $+\infty$ \\ \hline
    $\Z/9\Z$ & 18 &  $\Z/16\Z$ & $+\infty$ &$\Z/2\Z \times \Z/12\Z $ & $+\infty$ \\ \hline
    $\Z/10\Z$ & 18 & $\Z/18\Z $ & $+\infty$ & $\Z/4\Z \times \Z/4\Z $ & 12 \\ \hline
  \end{tabular}
  \end{table}
\end{theorem}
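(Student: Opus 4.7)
The plan is to split on the three parameterization cases from the excerpt.

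Case 2 is immediate: for $G \in \{\mathbb{Z}/13\mathbb{Z}, \mathbb{Z}/16\mathbb{Z}, \mathbb{Z}/18\mathbb{Z}\}$ the parameterizing curve $C$ has genus $>1$, so Faltings' theorem gives $|C(\Qi)| < \infty$, hence $N_G(X) = O(1)$ and $d(G) = +\infty$. Case 3 is nearly as clean: for the five groups with $C$ of genus $1$, Mordell-Weil makes $C(\Qi)$ finitely generated of some rank $r$, and the theory of canonical heights yields $|\{P \in C(\Qi) : h_{\mathrm{Weil}}(P) \leq h\}| = O(h^{r/2})$. Since the logarithmic height of the Weierstrass data associated to $(s,t)$ grows at most linearly in $h_{\mathrm{Weil}}(s,t)$, one obtains $N_G(X) = O((\log X)^{r/2})$, which is negligible compared to any $X^\epsilon$, forcing $d(G) = +\infty$.

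The substance is in Case 1. Following the strategy of Harron-Snowden, I would write $t = u/v$ with coprime $(u,v) \in \Zi^2$ and clear denominators via the admissible change of variables $(x,y) \mapsto (x/v^{2k}, y/v^{3k})$ with $k = \max(\lceil \deg f / 4 \rceil, \lceil \deg g / 6 \rceil)$, converting $\mathcal{E}_t$ into an integral Weierstrass model
\[
E_{u,v}: y^2 = x^3 + F(u,v)\, x + G(u,v),
\]
where $F,G \in \Zi[u,v]$ are homogeneous of degrees $4k$ and $6k$ respectively. Writing $Y = \max(|u|,|v|)$ for the complex absolute value, the height $\max(N(F)^3, N(G)^2)$ is bounded by a constant times $Y^{24k}$, while the number of coprime pairs with $|u|,|v| \leq Y$ is $\asymp Y^4$ (coprimality contributes only a positive density factor $1/\zeta_{\Qi}(2)$). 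Quotienting by the two-fold redundancy $(x,y)\mapsto(-x,iy)$ noted earlier and by the unit action of $\Zi^\times$, the naive count produces the first-pass asymptotic $N_G(X) \asymp X^{1/(6k)}$.

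The main obstacle is that $E_{u,v}$ is almost never minimal even when $\gcd(u,v) = 1$: at each prime $\bp$ of $\Zi$ dividing the discriminant of the universal family, a positive-density subset of $(u,v) \bmod \bp^n$ satisfies $\bp^4 \mid F(u,v)$ and $\bp^6 \mid G(u,v)$, and the true minimal model is obtained by further twisting by $\bp$. I would resolve this with a local-to-global sieve: at each bad $\bp$ compute the local density $\alpha_\bp$ of $(u,v)$ whose images are already minimal, use $\mathrm{Res}_t(f,g)$ together with the homogeneity of $F, G$ to confirm that only finitely many $\bp$ contribute a nontrivial factor, and combine these densities with a Tauberian lattice-point count in the dilating box to extract $d(G)$ as the naive $6k$ corrected by a rational factor coming from the minimization. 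Substituting the explicit polynomials $f, g$ given by \cite{FPR} for each $G$ in Case 1 then produces the claimed numerical values, and the group $\Z/2\Z \times \Z/2\Z$ is deferred to \S 5 since it does not admit a one-parameter universal family.
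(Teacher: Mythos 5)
Your handling of Cases 2 and 3 is sound in outline. Case 2 matches the paper exactly. For Case 3 the paper instead computes the Mordell--Weil groups $C(\Qi)$ explicitly and observes they are all finite (rank $0$), so $N_G(X)=O(1)$ immediately; your canonical-height argument is more general because it would also handle positive rank, though you should check the direction of the height comparison: to deduce $N_G(X)=O((\log X)^{r/2})$ you need a \emph{lower} bound of the form $\log(\text{height of }E_{s,t}) \gg h_{\mathrm{Weil}}(s,t)$ so that bounded curve height forces bounded Weil height of $(s,t)$; the inequality you cite goes the other way. Since the parametrization $C\to\{\text{curves}\}$ is a finite morphism, both bounds do hold, so this is a slip in exposition rather than in substance.

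The genuine gap is in Case 1, and it shows up precisely for $G=\Z/4\Z$. Your parametrization takes $t=u/v$ with coprime $u,v\in\Zi$ and homogenizes to $F,G$ of degrees $4k, 6k$ with $k=\max(\lceil r/4\rceil,\lceil s/6\rceil)$; the naive lattice-point count then gives $d(G)=6k$. Writing $n/m=\max(r/4,s/6)$ in lowest terms as in the paper, one has $6k=12n/(m+1)$ only when $m=1$. Every Case 1 group except $\Z/4\Z$ has $m=1$, so for those your naive count already produces the right exponent. But for $\Z/4\Z$ one has $r=2$, $s=3$, $n/m=1/2$, $k=1$: the naive count gives $d=6$, whereas the correct value is $d=12n/(m+1)=4$.

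Your proposed repair --- a local-to-global sieve over densities of minimal $(u,v)\bmod\bp^n$ --- cannot rescue this, because local density factors at finitely many primes only change the leading constant in $N_G(X)\asymp CX^{1/d}$, never the exponent $1/d$. What actually happens for $\Z/4\Z$ is structural. With $r=2$, $s=3$ and $k=1$ one gets $F(u,v)=v^2\tilde f(u,v)$, $G(u,v)=v^3\tilde g(u,v)$ with $\tilde f,\tilde g$ the degree-$2$ and degree-$3$ homogenizations; thus $\gcd(F^3,G^2)$ contains $v^6$, and the reduction to a minimal model strips out the largest $12$th power, which is controlled by the square part of $v$. If $v$ is a perfect square, the minimal model has height $\asymp Y^{12}$ rather than $Y^{24}$, and the number of such $(u,v)$ with minimal height $<X$ is $\asymp X^{1/4}$, which dominates the $\asymp X^{1/6}$ contribution from squarefree $v$. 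In other words, the dominant mass lives on a \emph{thin} (lower-dimensional) subset of your $(u,v)$-plane, and no sieve that only adjusts by local densities can see it. The paper handles this by building the phenomenon directly into the parametrization: Lemma 4.6 writes $t=a/b^m$ with $\gcd(a,b^m)$ not divisible by any $m$th power of a non-unit, and $u=qb^n$ with $q$ in a fixed finite set, yielding $N(a)\ll X^{m/12n}$, $N(b)\ll X^{1/12n}$ and hence $d(G)=12n/(m+1)$, which specializes to $4$ for $\Z/4\Z$. If you want to salvage your approach you should restrict the denominator of $t$ to be an $m$th power from the outset rather than trying to correct afterwards.
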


%\subsection{Limitations and Future Direction}
%This generalization only applies to those torsion subgroups whose universal family of elliptic curves $\mathcal{E}$ is parameterized by an open subset of $\mathbb{P}^1$. We thus obtain results for the groups $\mathbb{Z}/2\mathbb{Z} \times \mathbb{Z}/10\mathbb{Z}$, $\mathbb{Z}/2\mathbb{Z} \times \mathbb{Z}/12\mathbb{Z}$, and $\Z/4\Z \times \Z/4\Z$. It would be interesting to see how these arguments adapt to a family of curves that are parameterized by two variables.

\section{Reduction of the Problem}
We extend the proofs used in \S \S 1-2 of \cite{RHAS} to $\Qi$, providing slight modifications. The structure of this section will be to trace through these first two sections.

\subsection{Reduction of the Problem}
Motivated by the discussion in \S 2, we have the following definition:
\begin{defn}
 $N^2_G(X) = \{(A,B) \in \Zi^2: E: y^2 = x^3+Ax+B \text{ is a minimal Weierstrass }$ 

\noindent $ \text{equation of height less than } $X$ \text{ and } E(\Qi)_{tors} = G \}$.
\end{defn}
We shall compute the quantity $N^2_G(X)$, which roughly counts the number of isomorphism classes of curves up to height $X$ with a torsion subgroup isomorphic to $G$. For the groups treated in \S 4.1, we have that $N_G(X) = \frac{1}{2}N^2_G(X) + O(1)$. See \S 4.1 for a proof of this. In \S \S 4.2-4.3, we in fact do not use the reductions provided in this section to treat the groups in Case 2 and Case 3. Then, for the groups treated in \S 4.1 by replacing $N_G(X)$ with $\frac{1}{2} N^2_G(X)+O(1)$ we do not change the value of the limit:
\begin{equation}
    \lim_{X \to \infty} \frac{\text{log }N_G(X)}{\text{log } X} = \lim_{X \to \infty} \frac{\text{log }\frac{1}{2}N^2_G(X)+O(1)}{\text{log } X} = \lim_{X \to \infty} \frac{\text{log }N^2_G(X)-\text{log }2 + O(1)}{\text{log } X} = \lim_{X \to \infty} \frac{\text{log }N^2_G(X)}{\text{log } X}.
\end{equation}
 As in \cite{RHAS}, let $N'_G(X)$ be the number of elliptic curves up to height  $X$  with a torsion subgroup containing $G$. We similarly define the quantity $N^{2'}_G(X)$:
 \begin{defn}
$N^2_G(X) = \{(A,B) \in \Zi^2: E: y^2 = x^3+Ax+B \text{ is a minimal Weierstrass }$ 

\noindent $ \text{equation of height less than } $X$ \text{ and } E(\Qi)_{tors} \subseteq G \}$. \end{defn}
We now prove the following theorem.
\begin{theorem}
For the groups $G$ and constants $d(G)$ listed in Theorem 2.3, there exist positive constants $K_1$ and $K_2$ such that $K_1X^{1/d(G)} \leq N'_G(X) \leq K_2X^{1/d(G)}$.
\end{theorem}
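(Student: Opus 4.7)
The plan is to imitate \cite{RHAS} \S\S 1--2, exploiting the universal families $\mathcal{E}_t$ and $\mathcal{E}_{s,t}$ recorded in \S 2. Every $E/\Qi$ with $G \subseteq E(\Qi)_{tors}$ is isomorphic to some $\mathcal{E}_t$ with $t \in \Qi$ (Case 1) or to some $\mathcal{E}_{s,t}$ with $(s,t) \in C(\Qi)$ (Cases 2--3), and the map from parameters to isomorphism classes is generically finite-to-one. Hence, up to a bounded multiplicative factor, $N'_G(X)$ equals the number of admissible parameter values for which the associated Weierstrass equation has height at most $X$, and the exponential asymptotic is unaffected.

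For Case 1, I would write $t = u/v$ with $u, v \in \Zi$ coprime, homogenize $f$ and $g$, and clear denominators via the standard scaling $(x,y) \mapsto (x/v^{2k}, y/v^{3k})$ with $k = \max\bigl(\lceil \deg f / 4 \rceil,\ \lceil \deg g / 6 \rceil\bigr)$. This yields an integral model
\begin{equation*}
    y^2 = x^3 + A(u,v)\,x + B(u,v)
\end{equation*}
with $A, B$ homogeneous in $(u,v)$ of degrees $4k$ and $6k$, respectively. The height then satisfies $\max(N(A)^3, N(B)^2) \asymp \max(N(u),N(v))^{12k}$ for generic $(u,v)$. Since the number of coprime pairs $(u,v) \in \Zi^2$ with $\max(N(u),N(v)) \le H$ grows like $H^{4}$ (the Gauss circle count in $\Z^4$), setting $H = X^{1/(12k)}$ gives $N'_G(X) \asymp X^{1/(3k)}$, pinpointing $d(G) = 3k$ (possibly adjusted by a generic common factor of $A,B$ that further reduces the effective exponent for specific groups).

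For Cases 2 and 3, the parameterizing curve $C$ has genus $>1$ (Case 2) or $=1$ (Case 3). In Case 2, Faltings' theorem forces $|C(\Qi)| < \infty$, so $N'_G(X)$ is uniformly bounded in $X$. In Case 3, Mordell--Weil together with the canonical-height estimate $|\{P \in C(\Qi) : \hat h(P) \le T\}| \asymp T^{r/2}$, combined with the fact that the Weierstrass height of $\mathcal{E}_{s,t}$ grows exponentially in $\hat h(s,t)$, forces $N'_G(X)$ to grow at most poly-logarithmically in $X$. In either situation $\log N'_G(X)/\log X \to 0$, consistent with $d(G) = +\infty$ in the table, and the constants $K_1, K_2$ are extracted from any one explicit example and the explicit rank-and-height estimate.

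The main obstacle is the minimization analysis in Case 1: to upgrade the upper bound to a matching lower bound, one must verify that the naive integral model above is already minimal for a positive density of coprime pairs $(u,v)$, so that the minimal height agrees with the naive height up to bounded factors. This reduces to checking that for all but a codimension-one subset of coprime $(u,v)$, no prime $\bp \in \Zi$ divides $\gcd(A(u,v)^3, B(u,v)^2)$ to order $\ge 12$, a verification done case-by-case in \S 4 by examining the resultant and content of the explicit $A, B$ attached to each $G$.
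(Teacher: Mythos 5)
Your Case 1 approach is a genuinely different (and cruder) route than the paper's, and it has gaps that would make the argument fail for at least one of the groups.

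\textbf{The coprime count is wrong.} You assert that the number of coprime pairs $(u,v)\in\Zi^2$ with $\max(N(u),N(v))\le H$ grows like $H^4$. But $N(u)=|u|^2$, so $N(u)\le H$ is a disc of radius $\sqrt H$ in $\mathbb{C}\cong\R^2$, containing $\asymp H$ Gaussian integers; the pair count is therefore $\asymp H^2$, not $H^4$. With $H=X^{1/(12k)}$ this gives $X^{1/(6k)}$ rather than $X^{1/(3k)}$, i.e.\ $d(G)=6k$. That repair salvages the $m=1$ cases (e.g.\ $\Z/5\Z$, $\Z/7\Z$, $\Z/9\Z$) but not all.

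\textbf{The $m>1$ case is not handled.} Your naive homogenization $t=u/v$, $A=v^{4k}f(u/v)$, $B=v^{6k}g(u/v)$ with $k=\lceil n/m\rceil$ produces a model that is \emph{systematically non-minimal} whenever $m>1$. For $\Z/4\Z$ one has $r=2$, $s=3$, $n/m=1/2$, $k=1$; then $v^2\mid A$ and $v^3\mid B$, so $v^6\mid\gcd(A^3,B^2)$, and whenever $v$ is a perfect square the model fails minimality. The correct answer $d(\Z/4\Z)=4$ differs from your $6k=6$, and the ``possibly adjusted by a generic common factor'' aside does not supply the missing argument. The paper's resolution (Lemmas 4.5--4.7) is to track valuations prime-by-prime and show that the minimizing parametrization is $t=a/b^m$ with $\gcd(a,b^m)$ free of $m$-th powers and $u=qb^n$ for $q$ in a finite set, which is what produces the exponent $(m+1)/(12n)$. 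Your proposal omits the twist parameter $u$ entirely (you implicitly force $u=v^k$), and that is exactly where the $m$-power structure enters.

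\textbf{Case 3 proves less than the theorem claims.} For $d(G)=\infty$ the statement $K_1X^{1/d(G)}\le N'_G(X)\le K_2X^{1/d(G)}$ reads $K_1\le N'_G(X)\le K_2$, i.e.\ \emph{boundedness}. Your canonical-height argument yields $N'_G(X)\asymp(\log X)^{r/2}$ if the parametrizing curve has rank $r>0$, which is unbounded and would make the theorem false. The paper instead records that each of the genus-one parametrizing curves has \emph{finite} Mordell--Weil group over $\Qi$ (rank $0$), hence finitely many admissible parameter values, hence $N'_G(X)$ genuinely bounded. Your argument establishes only the weaker limit form (Theorem 2.3), not the two-sided bound of Theorem 3.3. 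Case 2 (Faltings) is the same in both treatments.
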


\begin{lemma}
Theorem 3.3 implies Theorem 2.3.
\end{lemma}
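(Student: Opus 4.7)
The plan is to pass from counts where the torsion merely contains $G$ (governed by $N'_G$ and bounded in Theorem 3.3) to counts where the torsion equals $G$ (governed by $N_G$ as in Theorem 2.3) via a finite stratification over the Kenku--Momose--Kamienny list. Every elliptic curve $E/\Qi$ has torsion isomorphic to some $H$ from Theorem 2.2, so I would begin with the exact identity
\[
N'_G(X) \;=\; \sum_{\substack{H \supseteq G \\ H \text{ from Thm.~2.2}}} N_H(X).
\]
In particular $N_G(X) \le N'_G(X) \le K_2 X^{1/d(G)}$ from Theorem 3.3, giving at once
\[
\limsup_{X \to \infty} \frac{\log N_G(X)}{\log X} \;\le\; \frac{1}{d(G)}.
\]

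For the matching lower bound, I would rearrange the stratification to isolate $N_G$:
\[
N_G(X) \;=\; N'_G(X) - \sum_{H \supsetneq G} N_H(X) \;\ge\; K_1 X^{1/d(G)} - \sum_{H \supsetneq G} K_2 X^{1/d(H)}.
\]
The crucial combinatorial input is that for every pair $G \subsetneq H$ with $G$ from the table of Theorem 2.3 and $H$ from the Kenku--Momose--Kamienny list, one has $d(H) > d(G)$ strictly (interpreting $d(H) = +\infty$ as larger than any finite value; the three groups $\Z/M\Z$, $M = 1,2,3$, omitted from Theorem 2.3 are strictly smaller than $\Z/4\Z$ and $\Z/2\Z \times \Z/2\Z$, so they never appear as a proper supergroup of any entry in the table). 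Setting
\[
\delta \;:=\; \min\!\bigl\{\tfrac{1}{d(G)} - \tfrac{1}{d(H)} \,:\, H \supsetneq G,\ H \in \text{Thm.~2.3}\bigr\} \;>\; 0,
\]
the total correction is $O(X^{1/d(G) - \delta}) + O(1)$, with the $O(1)$ absorbing supergroups with $d(H) = +\infty$, hence dominated by $K_1 X^{1/d(G)}$ for $X$ large. Thus $N_G(X) \ge \tfrac{K_1}{2} X^{1/d(G)}$, and taking logarithms produces the matching liminf $\ge 1/d(G)$.

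The only real obstacle is bookkeeping: one must enumerate containment relations among the 24 groups of Theorem 2.2 and check strict monotonicity of $d$ along each covering. A quick scan of Theorem 2.3 confirms this in every case --- for instance, the proper supergroups of $\Z/4\Z$ lie in $\{\Z/8\Z, \Z/12\Z, \Z/16\Z, \Z/2\Z \times \Z/4\Z, \Z/2\Z \times \Z/8\Z, \Z/2\Z \times \Z/12\Z, \Z/4\Z \times \Z/4\Z\}$ with $d$-values $12, 24, +\infty, 6, 24, +\infty, 12$, all exceeding $d(\Z/4\Z) = 4$. No analytic content beyond Theorem 3.3 enters the implication itself.
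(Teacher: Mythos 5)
Your argument is correct and follows essentially the same route as the paper: sandwich $N_G(X)$ between $N'_G(X) - \sum_{H \supsetneq G} N'_H(X)$ and $N'_G(X)$, then invoke the strict monotonicity $d(G) < d(H)$ for $G \subsetneq H$ (with $d = +\infty$ treated as the largest value) to show the correction terms are of strictly lower order. The only cosmetic difference is that the paper runs the sandwich through the Weierstrass-pair counts $N^2_G, N^{2'}_G$ and then transfers to $N_G, N'_G$ via the $O(1)$ relation, whereas you work with $N_G, N'_G$ directly; you are also somewhat more careful than the paper in handling the $+\infty$ cases and noting that the excluded groups $\Z/M\Z$, $M=1,2,3$, never arise as proper supergroups of table entries.
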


\begin{proof}
We have the following bounds:
\begin{equation}
    N_G^{2'}(X) - \sum_{G \subsetneq H} N_H^{2'}(X) \leq N_G^2(X) \leq N_G^{2'}(X).
\end{equation}
We have that $d(G) < d(H)$ when $G \subsetneq H$, so $N_G^2(X)/N_G^{2'}(X) \to 1$ as $X \to \infty$. 

The error term $O(1)$ becomes negligible as $X \to \infty$, so we have $N_G(X)/N_G'(X) \to 1$ as $X \to \infty$. Theorem 3.3 implies that $\frac{1}{d(G)} = \lim_{X \to \infty} \frac{\text{log }N_G(X)}{\text{log } X} $, proving Theorem 2.3.
\end{proof}

\section{Generalization of Harron and Snowden's Paper}
By (3.1), to prove Theorem 2.3 we may instead analyze the quantity $N_G^2(X)$.
\subsection{Case 1}
See page 3 above Theorem 2.3 for the list of groups in this case. We begin by proving the assertions of \S 3.1:
\begin{lemma}
$N^{2'}_G(X) = N^2_G(X) + O(1)$, where $O(1) \leq k \deg(g)$
\end{lemma}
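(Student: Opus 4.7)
The strategy is to exploit the polynomial parameterization $\mathcal{E}_t: y^2 = x^3 + f(t)x + g(t)$ of the universal family for $G$, together with the observation that the ``extra torsion'' locus in the parameter space is cut out by algebraic equations of controlled degree. By definition, the pairs counted by $N^{2'}_G(X) - N^2_G(X)$ are precisely the minimal Weierstrass pairs $(A, B)$ of height $<X$ for which $G \subsetneq E(\Qi)_{tors}$, so I would try to bound this set by the locus in $t$-space where $\mathcal{E}_t$ acquires extra torsion.

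The first step is to make the parameterization concrete: every $E/\Qi$ with $G \subseteq E(\Qi)_{tors}$ is $\Qi$-isomorphic to some $\mathcal{E}_t$, and after choosing a minimal twist the pair $(A, B)$ is determined by $t$ up to a bounded ambiguity (coming from the scalings that preserve the Weierstrass shape and from the two sign-isomorphic representations noted in \S2). So up to $O(1)$ fiber contributions the count $N^{2'}_G(X)$ is a count of parameter values $t \in \Qi$ with $\mathrm{ht}(f(t), g(t)) < X$.

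The second step is to isolate the locus of ``jumps.'' For each group $H$ with $G \subsetneq H$ from Theorem 2.3, the condition $H \subseteq \mathcal{E}_t(\Qi)_{tors}$ is governed by the vanishing of the appropriate division polynomials evaluated on the universal family, i.e., by polynomial equations in $t$ whose degrees grow linearly in $\deg(g)$ (since $f$ and $g$ enter the division polynomial through the coefficients of $\mathcal{E}_t$). Each such polynomial has at most $O(\deg(g))$ roots in $\Qi$, so for each fixed $H$ the contribution to $N^{2'}_G(X) - N^2_G(X)$ is bounded by a constant multiple of $\deg(g)$, uniformly in $X$. Summing over the finite list of $H$ in Theorem 2.3 that can contain $G$ yields a uniform bound $k \deg(g)$ as claimed.

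The main obstacle I anticipate is that for some Case 1 groups $G$ there exist larger $H \supsetneq G$ with $H$ also in Case 1, and in such cases the locus ``$H \subseteq \mathcal{E}_t(\Qi)_{tors}$'' is a priori infinite in $t$-space (it is itself parameterized by a universal family). To handle this, I would work instead with a minimal polynomial certificate: rather than imposing ``$H$ is contained'' globally, I would observe that for a specific minimal $(A, B)$, the value of $t \in \Qi$ giving rise to it is essentially unique, so any instance of extra torsion corresponds to a single $t$ satisfying one division-polynomial condition, and one again obtains the bound $O(\deg(g))$. Verifying this uniqueness up to the bounded twist ambiguity, and confirming that the resulting polynomial conditions are nontrivial for every larger $H$ in Theorem 2.3, is where most of the care is needed.
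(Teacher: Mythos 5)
Your proposal addresses a different statement than the one the paper actually proves here, and the approach you take cannot succeed for the reason you yourself identify in the final paragraph. You read the lemma as bounding the difference between ``torsion contains $G$'' and ``torsion equals $G$,'' i.e. the extra-torsion locus. But that difference is not $O(1)$: for a Case~1 group $G$ (say $G = \Z/4\Z$) there are larger groups $H \supsetneq G$ (say $\Z/2\Z \times \Z/4\Z$) that are also rationally parameterized, so the set of $t$ with $H \subseteq \mathcal{E}_t(\Qi)_{\mathrm{tors}}$ is infinite and the corresponding count grows like a positive power of $X$. Your attempted patch---that $t$ is essentially determined by $(A,B)$, so ``one again obtains $O(\deg g)$''---does not repair this: uniqueness of $t$ per pair $(A,B)$ says nothing about how many pairs there are, and there are infinitely many. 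In the paper that discrepancy is not part of Lemma~4.1 at all; it is handled in Lemma~3.4 by inclusion--exclusion together with the fact that $d(H) > d(G)$ for $G \subsetneq H$, so the extra-torsion terms are of strictly lower order (not bounded).

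The actual content of Lemma~4.1, as the paper's one-line proof makes clear, is the bookkeeping correction for the sign-twist: the change of variable $(x,y) \mapsto (x/i^2, y/i^3)$ turns $(A,B)$ into $(A,-B)$, so each isomorphism class has two minimal Weierstrass pairs when $B \neq 0$ but only one when $B = 0$. The lemma therefore needs to show that the set of $(A,B)$ in the count with $B = 0$ has bounded size. Since $B = u^6 g(t)$, one has $B = 0$ iff $g(t) = 0$ (using coprimality of $f,g$ so $A \neq 0$), which gives at most $\deg g$ values of $t$, and for each such $t$ only finitely many $u$ make the resulting equation minimal. That yields the claimed bound $k\deg(g)$ and justifies $N_G(X) = \tfrac{1}{2}N^2_G(X) + O(1)$. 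None of the division-polynomial machinery you invoke is needed; this is a purely elementary counting observation about the $B = 0$ fiber.
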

\begin{proof}
We have $B = 0$ if and only if $g(t) = 0$. There are at most $\deg(g)$ such $t$. Finally, there are only finitely many $u$ such that $y^2 = x^3+u^4f(t)$ is minimal.
\end{proof}
Having reduced to computing $N^{2'}_G(X)$, we compute it in Proposition 4.2. We show that \cite[Proposition 2.1]{RHAS} continues to hold in $\Zi$. The following proposition implies Theorem 3.3, and so by Lemma 3.4 it implies Theorem 2.3 for the groups that fall into this case:
\begin{prop}
Let $ f, g \in \mathbb{Q}[t] $ be coprime polynomials of degrees $ r $ and $s$. Assume at least one of $r$ or $s$ is positive. Write 
\begin{center} 
max$(\frac{r}{4}, \frac{s}{6}) = \frac{n}{m}$
\end{center}
with $n$ and $m$ coprime. Assume $n=1$ or $m=1$. Let $S(X)$ be the set of pairs $(A,B) \in \mathbb{Z}[i]^2$ satisfying the following conditions: 

\begin{itemize}
\item $4A^3+27B^2 \neq 0$.
\item $\text{gcd}(A^3, B^2)$ is not divisible by any 12th power.
\item $N(A) < X^{1/3}$ and $N(B) < X^{1/2}$.
\item There exist $ u, t \in \mathbb{Q}[i]$ such that $A=u^4f(t)$ and $B=u^6g(t)$.
\end{itemize}
Then, there exist positive real constants $K_1$ and $K_2$ such that $K_1X^{(m+1)/12n} \leq \left| S(X) \right| \leq K_2X^{(m+1)/12n}$. Since $\left| S(X) \right| = N_G^{2'}(X)$, this proves Theorem 3.3.
\end{prop}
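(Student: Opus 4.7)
The plan is to adapt the proof of \cite[Proposition~2.1]{RHAS} from $\mathbb{Z}$ to $\mathbb{Z}[i]$. First I would parameterize $t = c/d$ with $c, d \in \mathbb{Z}[i]$ coprime, well-defined up to the four units, and homogenize: letting $F(c, d) = d^r f(c/d)$ and $G(c, d) = d^s g(c/d)$, the pair $(A, B) = (u^4 f(t), u^6 g(t))$ becomes $A = u^4 F(c, d)/d^r$ and $B = u^6 G(c, d)/d^s$. Coprimality of $c$ and $d$ forces $\gcd(F(c, d), d)$ and $\gcd(G(c, d), d)$ to divide the leading coefficients of $f$ and $g$, so modulo bounded multiplicative factors the integrality of $A$ and $B$ amounts to $d^r \mid u^4$ and $d^s \mid u^6$. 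The hypothesis $n = 1$ or $m = 1$ is exactly what consolidates these two divisibility conditions into a single clean substitution: in the case $m = 1$ one sets $u = v d^n$ with $v \in \mathbb{Z}[i]$, and in the case $n = 1$ (so $m \in \{1, 2, 3, 4, 6\}$) an analogous substitution based on the $m$-th-power-free part of $d$ makes $A$ and $B$ polynomially integral.

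Either substitution reduces the problem to counting triples $(v, c, d) \in \mathbb{Z}[i]^3$ satisfying $\gcd(c, d) = 1$, a minimality condition on $v$, and size constraints of the form $N(v)^{\alpha} \max(N(c), N(d))^{\beta} \lesssim X^{\gamma}$ coming from $N(A) < X^{1/3}$ and $N(B) < X^{1/2}$. Since in $\mathbb{Z}[i]$ the number of elements with $N(z) \leq Y$ grows like $\pi Y$ (the area of a disk of radius $\sqrt{Y}$), which is comparable to the $\sim 2Y$ rational integers with $|z| \leq Y$, the lattice point count of the parameter region has the same asymptotic order as the analogous count over $\mathbb{Z}$ in \cite{RHAS}. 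Carrying out the computation gives the upper bound immediately from a volume estimate, and a matching lower bound after a standard M\"obius inclusion--exclusion to enforce $\gcd(c, d) = 1$, yielding $|S(X)| \asymp X^{(m+1)/(12n)}$.

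The main obstacle is the minimality condition: after absorbing the bounded factor $\gcd(f(t)^3, g(t)^2)$, which is controlled uniformly in $t$ by the resultant of $f^3$ and $g^2$ (using coprimality of $f$ and $g$), one needs $v$ itself to contribute no twelfth-power divisor to $\gcd(A^3, B^2)$. A M\"obius inversion over twelfth powers of Gaussian primes dividing $v$ handles this and contributes only lower-order error. A secondary technical issue not present in \cite{RHAS} is that $\mathbb{Z}[i]$ has four units, so each $(A, B)$ is represented by several $(v, c, d)$ via unit rescaling; this only contributes a bounded multiplicative constant absorbed into $K_1$ and $K_2$. Together these steps yield $K_1 X^{(m+1)/(12n)} \leq |S(X)| \leq K_2 X^{(m+1)/(12n)}$, as desired.
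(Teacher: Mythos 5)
Your plan is broadly correct and lands on the same skeleton as the paper's proof: parameterize $(A,B)$ by $(u,t)$, normalize $t$ to a preferred fractional form, show that the minimality condition essentially forces the prime-by-prime valuations of $u$ up to bounded error (the paper's Lemma 4.5), deduce box-type bounds on the integer parameters, and then count lattice points in $\mathbb{Z}[i]$ with coprimality and minimality handled via inclusion--exclusion. The observation that the count of $z\in\mathbb{Z}[i]$ with $N(z)\le Y$ is $\Theta(Y)$, so that the exponent $(m+1)/12n$ transfers unchanged from the $\mathbb{Z}$ case, is exactly the right reason the Harron--Snowden exponent survives the base change, and it is the same reason the paper relies on.

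Where you differ from the paper is in the normalization of $t$. The paper uses the single uniform decomposition $t=a/b^m$ with $\gcd(a,b^m)$ free of $m$-th powers, which is engineered so that Lemma 4.5 directly gives $u = q\,b^n$ with $q$ ranging over a fixed finite set $Q\subset\mathbb{Q}[i]^\times$ (Lemma 4.6), and then both bounds drop out of the same parametrization (Lemma 4.7 for the upper bound; Lemmas 4.8--4.11 for the lower bound, where minimality is handled by the maps $S_2\to S_3\to S$ with bounded fibers rather than by M\"obius inversion). You instead start from $t=c/d$ in lowest terms and split into $m=1$ and $n=1$; this works, but for $n=1$, $m>1$ you effectively have to reconstruct the paper's $b$ (the prime-by-prime rounding $\mathrm{val}_\bp(b)=\lceil \mathrm{val}_\bp(d)/m\rceil$), and your phrase ``based on the $m$-th-power-free part of $d$'' undersells this: it is not the $m$-th-power-free part of $d$ but a ceiling to the nearest $m$-th power, and writing the argument in two cases costs you the clean uniformity that makes the paper's Lemma 4.6 a one-liner.

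Two small inaccuracies worth fixing. First, you write $u=v d^n$ with $v\in\mathbb{Z}[i]$; in fact the element $v$ (the paper's $q$) need only lie in a finite subset of $\mathbb{Q}[i]^\times$, not in $\mathbb{Z}[i]$, since the bounded-error term $\epsilon$ in the valuation formula can be negative at the finitely many primes where $C_\bp>0$. This does not break the count, but the integrality claim as stated is false. Second, the statement that $\gcd(f(t)^3,g(t)^2)$ is ``controlled uniformly in $t$ by the resultant'' is the right intuition but the wrong formulation when $t\in\mathbb{Q}[i]$ rather than $\mathbb{Z}[i]$; the correct uniform statement is the $\bp$-adic one that $\max(|f(t)|_\bp,|g(t)|_\bp)\ge c_\bp$ with $c_\bp=1$ for $N(\bp)\gg 0$ (the paper's Lemma 4.4), from which the resultant bound follows only after the $u$-normalization makes $A,B$ integral. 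With these two points tightened, your sketch fills out to a proof equivalent to the paper's.
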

We will now proceed to prove Proposition 4.2, starting with the upper bound. We have the following data for the groups in Case 1:
\begin{table}[H]
\centering
  \begin{tabular}{ | l | l | l | l | l | l | }
    \hline
    $G$ & $r$ & $s$ & $n$ & $m$ & $12n/(m+1)$ \\ \hline
    $\mathbb{Z}/4\Z$ & $2$ & $3$ & $1$ & $2$ & $4$ \\ \hline
    $\mathbb{Z}/5\Z$ & $4$ & $6$ & $1$ & $1$ & $6$ \\ \hline
    $\mathbb{Z}/6\Z$ & $4$ & $6$ & $1$ & $1$ & $6$ \\ \hline
    $\mathbb{Z}/7\Z$ & $8$ & $12$ & $2$ & $1$ & $12$ \\ \hline
    $\mathbb{Z}/8\Z$ & $8$ & $12$ & $2$ & $1$ & $12$ \\ \hline
    $\mathbb{Z}/9\Z$ & $12$ & $18$ & $3$ & $1$ & $18$ \\ \hline
    $\mathbb{Z}/10\Z$ & $12$ & $18$ & $3$ & $1$ & $18$ \\ \hline
    $\mathbb{Z}/12\Z$ & $16$ & $24$ & $4$ & $1$ & $24$ \\ \hline
    $\mathbb{Z}/2\Z \times \mathbb{Z}/4\Z$ & $4$ & $6$ & $1$ & $1$ & $6$ \\ \hline
    $\mathbb{Z}/2\Z \times \mathbb{Z}/6\Z$ & $8$ & $12$ & $2$ & $1$ & $12$ \\ \hline   
    $\mathbb{Z}/2\Z \times \mathbb{Z}/8\Z$ & $16$ & $24$ & $4$ & $1$ & $24$ \\ \hline 
    $\mathbb{Z}/4\Z \times \mathbb{Z}/4\Z$ & $8$ & $12$ & $2$ & $1$ & $12$ \\ \hline 
  \end{tabular}
  \caption{Data for the universal elliptic curves.}
  \label{table: 1}
  \end{table}
The data were computed using the table \cite[\S 1.4, Table 2]{RHAS} and the computations of the degrees seen in Appendix A, derived from \cite{RHAS}.
\subsubsection{The Upper Bound}
%\begin{defn}
%Let $S(X)$ denote the set of pairs $(A,B) \in \Zi^2$ such that:
%\begin{itemize}
%\item $4A^3+27B^2 \neq 0$.
%\item $\text{gcd}(A^3, B^2)$ is not divisible by any 12th power.
%\item $N(A) < X^{1/3}$ and $N(B) < X^{1/2}$.
%\item There exist $ u, t \in \mathbb{Q}[i]$ such that $A=u^4f(t)$ and $B=u^6g(t)$.
%\end{itemize}
%\end{defn}
Continue to let $f, g \in \Q [t]$ where $f$ and $g$ are coprime. Let $\overline{\mathbb{Q}}$ be a fixed algebraic closure of $\mathbb{Q}$, and extend $\left|\cdot\right|_{\bp}$ to $\overline{\mathbb{Q}}$. Let $\{\alpha_i\}$ be the roots of $f$ and $\{\beta_j\}$ the roots of $g$ in $\overline{\mathbb{Q}}$. We now have the following definition:
\begin{defn}
Let $S_1(X)$ denote the set of pairs $(u,t) \in \Qi^2$ such that $(A,B) = (u^4f(t), u^6g(t)) \in S(X)$.
\end{defn}

\begin{lemma}
Let $f, g \in \mathbb{Q}[t] $ be two coprime polynomials. For each prime $\textbf{p}$ of $\mathbb{Z}[i]$ (with $N(\bp) \leq \infty $), there exists a constant $c_{\textbf{p}}>0$ such that for all $t\in\mathbb{Q}[i]$,

\begin{equation}
\text{max}(\left|f(t)\right|_{\textbf{p}}, \left|g(t)\right|_{\textbf{p}}) \geq c_{\textbf{p}}.
\end{equation}

\noindent If $N(\textbf{p})$ is sufficiently large, one can take $c_{\bp}=1$.
\end{lemma}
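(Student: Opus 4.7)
The plan is to reduce the estimate to a Bezout identity together with a case split on the size of $|t|_\bp$. Since $f, g \in \mathbb{Q}[t]$ are coprime and $\mathbb{Q}[t]$ is a principal ideal domain, there exist $A, B \in \mathbb{Q}[t]$ satisfying
\[
A(t)\,f(t) + B(t)\,g(t) = 1.
\]
For a polynomial $P \in \mathbb{Q}[i][t]$, I will write $\|P\|_\bp$ for the maximum $\bp$-adic absolute value of its coefficients; let $\ell_f, \ell_g$ denote the leading coefficients of $f, g$, and set $M_\bp = \max\bigl(1,\, \max_i |\alpha_i|_\bp,\, \max_j |\beta_j|_\bp\bigr)$.

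For $\bp$ non-archimedean, the plan is to split into three regimes. In $|t|_\bp \leq 1$, the ultrametric inequality yields $|A(t)|_\bp \leq \|A\|_\bp$ and $|B(t)|_\bp \leq \|B\|_\bp$, and applying it once more to the Bezout identity gives
\[
\max(|f(t)|_\bp, |g(t)|_\bp) \;\geq\; \frac{1}{\max(\|A\|_\bp, \|B\|_\bp)}.
\]
In $|t|_\bp > M_\bp$, the factorization $f(t) = \ell_f \prod_i (t - \alpha_i)$ together with the ultrametric equality $|t - \alpha_i|_\bp = |t|_\bp$ forces $|f(t)|_\bp = |\ell_f|_\bp |t|_\bp^r \geq |\ell_f|_\bp$. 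The intermediate window $1 < |t|_\bp \leq M_\bp$ is a compact subset of the completion $K_\bp$, on which $\max(|f|_\bp, |g|_\bp)$ is continuous and strictly positive (the strict positivity follows from $f, g$ having no common root in $\overline{K_\bp}$, which is immediate from coprimality in $\mathbb{Q}[t]$).

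For $\bp$ archimedean, the same three-regime strategy works with $K_\bp = \mathbb{C}$: compactness of closed disks in $\mathbb{C}$ replaces the ultrametric compactness argument, and for $|t|$ exceeding twice the maximum modulus of any root the standard triangle inequality produces $|f(t)| \geq \tfrac{1}{2}|\ell_f||t|^r$. Taking $c_\bp$ to be the minimum of the constants obtained on each regime then completes the first part of the lemma.

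For the final claim, observe that for all but finitely many primes $\bp$ of $\mathbb{Z}[i]$ we have $\|A\|_\bp, \|B\|_\bp \leq 1$ (only the finitely many primes dividing the denominators of $A$ and $B$ are excluded), $|\ell_f|_\bp = |\ell_g|_\bp = 1$, and all roots $\alpha_i, \beta_j$ are $\bp$-integral, so $M_\bp = 1$ and the intermediate window is empty. The non-archimedean bounds above then collapse uniformly to $\max(|f(t)|_\bp, |g(t)|_\bp) \geq 1$, so $c_\bp = 1$ suffices. The main obstacle is controlling the intermediate regime $1 < |t|_\bp \leq M_\bp$ at the bad non-archimedean primes; I would handle this either by the compactness argument in $K_\bp$ indicated above, or more concretely by inserting the estimate $|A(t)|_\bp \leq \|A\|_\bp M_\bp^{\deg A}$ into the Bezout identity, at the cost of a slightly weaker constant.
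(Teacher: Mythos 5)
Your proof is correct, but it takes a genuinely different route from the paper. The paper's argument is geometric: it sets $\delta = \min_{i,j}|\alpha_i - \beta_j|_\bp$ (the minimal $\bp$-adic distance between a root of $f$ and a root of $g$), chooses $\epsilon$ so that $|f(t)|_\bp < \epsilon$ forces $t$ within $\delta/2$ of some $\alpha_i$ and similarly for $g$, and derives a contradiction via the (ultrametric) triangle inequality; the large-$\bp$ case then follows because $\delta = 1$ once the differences $\alpha_i - \beta_j$ are $\bp$-adic units. Your argument instead makes the coprimality hypothesis explicit through a Bezout identity $A f + B g = 1$ in $\mathbb{Q}[t]$ and applies the ultrametric inequality directly to it, using the factorization into roots only to handle the regime $|t|_\bp > M_\bp$. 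This is arguably cleaner: the constant $c_\bp$ comes out as something explicit in terms of $\|A\|_\bp$, $\|B\|_\bp$, $|\ell_f|_\bp$, $|\ell_g|_\bp$ rather than via an existence argument for $\epsilon$, and the large-$\bp$ specialization is immediate (Bezout coefficients, leading coefficients, and roots are all $\bp$-integral away from finitely many primes, so $M_\bp = 1$, the intermediate window vanishes, and both remaining bounds collapse to $1$). As you note, the Bezout bound $|A(t)|_\bp \leq \|A\|_\bp M_\bp^{\deg A}$ actually covers the entire region $|t|_\bp \leq M_\bp$ in one stroke, so you could streamline to two regimes and avoid the compactness argument in the intermediate annulus entirely. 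The only cosmetic slip is in the archimedean large-$|t|$ estimate, where $|t| > 2\max_i|\alpha_i|$ gives $|f(t)| \geq 2^{-r}|\ell_f||t|^r$ rather than $\tfrac{1}{2}|\ell_f||t|^r$; this does not affect the conclusion.
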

\begin{remark}
We require the case of $N(\bp) = \infty$ for the proof of Lemma 4.7, where the standard absolute value assumes this case.
\end{remark}
\begin{proof}
We first analyze the roots of $f$ and $g$. Note that $\alpha_i \neq \beta_j$ for all $i$ and $j$ since $f$ and $g$ are coprime. 

Let $\delta = \min_{ij}(\left|\alpha_i - \beta_j\right|_{\bp})$. Let $\epsilon>0$ be such that $\left|f(t)\right|_{\bp} < \epsilon$ implies $\left|t-\alpha_i\right|_{\bp} < \delta/2$ for some $i$ and $\left| g(t) \right|_{\bp} < \epsilon $ implies $\left|t-\beta_j\right|_{\bp} < \delta/2$ for some $j$. To see why such an $\epsilon$ exists, consider the factorizations $f(t) = a\prod_{i = 1}^{r}(t-\alpha_i)$ and $g(t) = b\prod_{j = 1}^{s}(t-\beta_j)$ where $a,b \in \R$. If $\left|f(t)\right|_{\bp}, \left| g(t) \right|_{\bp} < \epsilon$, there exists indices $m$ and $n$ where $\left|t - \alpha_m \right|_{\bp} \leq (\epsilon/a)^{1/r}$ and $\left| t - \beta_n \right|_{\bp} \leq (\epsilon/b)^{1/s}$. We may choose $\epsilon$ such that $(\epsilon/a)^{1/r}, (\epsilon/b)^{1/s} < \delta/2$.

We proceed with proof by contradiction. If $\left|f(t)\right|_{\bp} < \epsilon$ and $\left|g(t)\right|_{\bp} < \epsilon$ then $\left|t-\alpha_i\right|_{\bp} < \delta/2$ and $\left|t-\beta_j\right|_{\bp} < \delta/2$ for some $i$ and $j$. However, this implies that $\left|\alpha_i-\beta_j\right|_{\bp} < \delta$ by the triangle inequality, a contradiction as $\delta $ is the minimal value of $\left|\alpha_i-\beta_j\right|_{\bp}$ for any $i$ and $j$. We must therefore have for all $t$ that either $\left|f(t)\right|_{\bp} \geq \epsilon$ or $\left|g(t)\right|_{\bp} \geq \epsilon$, so we can take $c_{\bp}=\epsilon$.

Now let $p$ be an integer prime with sufficiently large norm such that: (1) the coefficients of $f$ and $g$ are in $\mathbb{Z}_{p}$; (2) the leading coefficients of $f$ and $g$ are elements of $\Z_{p}^{\times}$ ; and (3) $\alpha_i-\beta_j$ are elements of $\overline{\Z_{p}}^{\times}$, for all $i$ and $j$.

Condition 3 gives us $\delta = 1$, since $\alpha_i-\beta_j$ then must satisfy $\left|\alpha_i-\beta_j\right|_{p} = p^0 = 1$.  If $\left|f(t)\right|_{p} < 1$ and $\left|g(t)\right|_{p} < 1$, then $\left|t-\alpha_i\right|_{p} < 1$ for some $i$, and $\left|t-\beta_j\right|_{p} < 1$ for some $j$. For this pair $(i,j)$, this implies $\left|\alpha_i-\beta_j\right|_{p} < 1 $, another contradiction, this time by the non-Archimedean triangle inequality that exists when working with the $p$-adic norm. That is, $ \left|x+y\right|_{p} \leq \text{max}(\left|x\right|_{p}, \left|y\right|_{p})$. Thus, we can take $c_{p} = 1$ for all primes $p$ with sufficiently large norm.
\end{proof}

We now prove a variant of \cite[Lemma 2.3]{RHAS}.

\begin{lemma}
For each Gaussian prime $\bp$ there exists a constant $C_{\bp}$ with the following property. Suppose $(u,t)\in S_1(X)$. Then

\begin{center}
$ \text{val}_{\bp}(u) = \epsilon + 
\begin{cases}
\lceil - \frac{n}{m} \text{val}_{\bp}(t)\rceil  & \text{$\text{val}_{\bp}(t) < 0$} \\
0 & \text{$\text{val}_{\bp}(t) \geq 0$}
\end{cases} $
\end{center}
where \textit{$\left|\epsilon\right| \leq C_{\bp}$. Furthermore, one can take $C_{\bp}=0$ for $N(\bp) \gg 0$.}
\end{lemma}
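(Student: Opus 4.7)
The plan is to translate the conditions defining $S(X)$ into $\bp$-adic inequalities on $v(u) := \text{val}_\bp(u)$ in terms of $v(f(t))$ and $v(g(t))$, and then to compute the latter two from $v(t) := \text{val}_\bp(t)$. Writing $A = u^4 f(t)$ and $B = u^6 g(t)$, the integrality $A, B \in \Zi$ gives $v(u) \geq -v(f(t))/4$ and $v(u) \geq -v(g(t))/6$, while the minimality condition (that $\gcd(A^3,B^2)$ is not divisible by any 12th power at $\bp$) translates to $12\, v(u) + \min(3v(f(t)), 2v(g(t))) < 12$. The lemma then reduces to solving this system once $v(f(t))$ and $v(g(t))$ are expressed in terms of $v(t)$.

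First I would handle the case $N(\bp) \gg 0$: large enough that all coefficients of $f$ and $g$ are $\bp$-adically integral with leading coefficients $\bp$-adic units, and large enough that $c_\bp = 1$ in Lemma 4.6. If $v(t) \geq 0$, the non-archimedean triangle inequality gives $v(f(t)), v(g(t)) \geq 0$, and Lemma 4.6 forces at least one of these valuations to equal $0$; the integrality and minimality bounds then squeeze $v(u)$ between $0$ and $0$, so $v(u) = 0$. If $v(t) = -w < 0$, the leading monomial dominates $\bp$-adically in both $f(t)$ and $g(t)$, yielding $v(f(t)) = -rw$ and $v(g(t)) = -sw$. Substituting into the bounds gives
\[
w\cdot \max(r/4, s/6) \;\leq\; v(u) \;<\; 1 + w\cdot \max(r/4, s/6),
\]
i.e.\ $wn/m \leq v(u) < 1 + wn/m$, and since $v(u) \in \Z$ this forces $v(u) = \lceil wn/m\rceil = \lceil -(n/m)v(t)\rceil$. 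In both cases one can take $C_\bp = 0$.

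For the finitely many primes $\bp$ of small norm, non-unit leading coefficients of $f$ and $g$, non-integral lower coefficients, and the possibility that $c_\bp < 1$ in Lemma 4.6 each perturb $v(f(t))$ and $v(g(t))$ by a $\bp$-dependent additive constant that is independent of $t$. Propagating these perturbations through the integrality and minimality inequalities still squeezes $v(u)$ into an interval of bounded length around the value predicted by the formula, so a uniform $C_\bp$ exists. The main technical nuisance is the book-keeping: one has to verify that for small $w$, lower-order terms of $f(t)$ and $g(t)$ cannot push the two valuations away from the leading-term values $-rw$ and $-sw$ by more than a $\bp$-dependent constant, using only the fixed finite set of coefficient valuations of $f$ and $g$. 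Once those estimates are in place, the clean analysis of the previous paragraph carries through with the announced error $|\epsilon| \leq C_\bp$.
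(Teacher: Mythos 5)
Your overall framework (translate integrality and minimality of $(A,B)$ into bounds on $\operatorname{val}_{\bp}(u)$, then compute $\operatorname{val}_{\bp}(f(t))$ and $\operatorname{val}_{\bp}(g(t))$ from $\operatorname{val}_{\bp}(t)$) matches the paper's, and your treatment of the cases $\operatorname{val}_{\bp}(t) \geq 0$ and $N(\bp) \gg 0$ is correct. But there is a genuine gap in your handling of the finitely many small primes when $\operatorname{val}_{\bp}(t) < 0$.

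You assert that for small $\bp$, the non-unit coefficients of $f$ and $g$ "perturb $v(f(t))$ and $v(g(t))$ by a $\bp$-dependent additive constant that is independent of $t$," so that $v(f(t)) = -rw + O_{\bp}(1)$ and $v(g(t)) = -sw + O_{\bp}(1)$. This is false. The obstruction is not lower-order coefficient sizes but cancellation: if $t$ is $\bp$-adically close to a root $\alpha$ of $f$ with $\operatorname{val}_{\bp}(\alpha) < 0$, then $\operatorname{val}_{\bp}(f(t))$ can be made arbitrarily large while $\operatorname{val}_{\bp}(t)$ stays fixed. Concretely, for $f(t) = pt - 1$ (root $\alpha = 1/p$, $\operatorname{val}_p(\alpha) = -1$), taking $t = 1/p + p^N$ gives $\operatorname{val}_p(t) = -1$ but $\operatorname{val}_p(f(t)) = N+1 \to \infty$, so no $\bp$-dependent constant bounds the deviation. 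The paper deals with this by splitting into Case I ($t$ outside small balls around all roots of $f$ and $g$) and Case II ($t$ near a root of one of them). In Case II, coprimality of $f$ and $g$ keeps $t$ away from roots of the \emph{other} polynomial, and then the formula $-\operatorname{val}_{\bp}(u) = \min\bigl(\lfloor \tfrac14\operatorname{val}_{\bp}(f(t))\rfloor, \lfloor \tfrac16\operatorname{val}_{\bp}(g(t))\rfloor\bigr)$ is saved by the numerical coincidence $\tfrac{r}{4} = \tfrac{s}{6}$, which holds for every parameterization in Table 1 and is what guarantees the min picks up the correct asymptotic $\tfrac{n}{m}\operatorname{val}_{\bp}(t)$ regardless of which polynomial's valuation blows up. Your argument neither excludes the near-root regime nor invokes $\tfrac{r}{4} = \tfrac{s}{6}$, so it does not go through for small $\bp$ as written.
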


\begin{proof}
Let $B(x,r)$ be the open ball of radius $r$ centered at $x$. Fix an arbitrarily small constant $\delta > 0$ such that $B(\alpha_i, \delta) \cap B(\beta_j, \delta)$ is empty for all $i$ and $j$ and each $B(\alpha_i, \delta)$ and $B(\beta_j, \delta)$ contains at most one root of $f$ and $g$, respectively. Furthermore, suppose this $\delta$ satisfies that $t \in B(\alpha_i, \delta)$ implies that $\text{min}(\lfloor \frac{1}{4}\text{val}_{\bp}(f(t)) \rfloor, \lfloor \frac{1}{6}\text{val}_{\bp}(g(t)) \rfloor) = \lfloor \frac{1}{6}\text{val}_{\bp}(g(t)) \rfloor$ and similarly for $t \in B(\beta_j, \delta)$.

Before splitting into cases we make some general deductions. Suppose $ (u,t) \in S_1(X)$, and let $\bp$ be a prime. Since $A$ and $B$ are integral, we have $4 \text{val}_{\bp}(u) + \text{val}_{\bp}(f(t)) \geq 0 $ and $ 6 \text{val}_{\bp}(u) + \text{val}_{\bp}(g(t)) \geq 0$. Furthermore, $\text{val}_{\bp}(u) $ must be minimal subject to these inequalities, or else $\bp^{12}$ would divide $ \text{gcd}(A^3, B^2) $, a contradiction. We can thus write the following:
\begin{equation}
 \text{val}_{\bp}(u) =  \text{max}(\lceil -\frac{1}{4}\text{val}_{\bp}(f(t))\rceil, \lceil -\frac{1}{6}\text{val}_{\bp}(g(t)) \rceil).
 \end{equation}
By taking the negative of both sides of (4.2) we have:
\begin{equation}
-\text{val}_{\bp}(u) = \text{min}(\lfloor \frac{1}{4}\text{val}_{\bp}(f(t)) \rfloor, \lfloor \frac{1}{6}\text{val}_{\bp}(g(t)) \rfloor).
\end{equation}
We split into the following cases:
\begin{enumerate}
    \item \textbf{Case I:} $t \notin B(\alpha_i, \delta), B(\beta_j, \delta)$ for any $i$ and $j$.
    \item \textbf{Case II:} $t$ is in at most one of the balls.
\end{enumerate}
where $t \notin B(\alpha_i, \delta), B(\beta_j, \delta)$ for any $i$ and $j$ and where $t$ is in at most one of the balls. 

Assume now that $\text{val}_{\bp}(t) < 0$.

%Note that $t \in B(\alpha_i, \delta), B(\beta_j, \delta)$ since $B(\alpha_i, \delta) \cap B(\beta_j, \delta) = \O$ for all $i$ and $j$. 

\textbf{Case I:}
We prove there exists a positive $K_1$ such that $\left|\text{val}_{\bp}(f(t)) - r \text{val}_{\bp}(t)\right| = \left| \text{val}_{\bp}(f(t)/t^r) \right| < K_1$ and $\left|\text{val}_{\bp}(g(t)) - s \text{val}_{\bp}(t)\right| = \left| \text{val}_{\bp}(g(t)/t^s) \right| < K_1$ for all such $t$. As stated in Proposition 4.2, the degrees of $f$ (resp. $g$) are $r$ (resp. $s$). Then,

\begin{equation}
\frac{f(t)}{t^r} = \sum_{k = 0}^{r} c_kt^{-k}
\end{equation}
and
\begin{equation}
\frac{g(t)}{t^s} = \sum_{j = 0}^{s} d_jt^{-j}
\end{equation}

\noindent Note that because $t \notin B(\alpha_i, \delta), B(\beta_j, \delta)$ for any $i$ and $j$, for $\text{val}_{\bp}(t) \leq v$ for some sufficiently negative $v$, we have that $\text{val}_{\bp}(f(t)/t^r) = \text{val}_{\bp}(c_0) $ and $\text{val}_{\bp}(g(t)/t^s) = \text{val}_{\bp}(d_0)$. For the case where $v < \text{val}_{\bp}(t) < 0$, the set of such $t$ is compact, and since $\text{val}_{\bp}(f(t)/t^r)$ and $\text{val}_{\bp}(g(t)/t^s)$ are both continuous (since $f(t), g(t) \neq 0$ on $\{t \in \Qi_{\bp}: v < \text{val}_{\bp}(t) < 0 \}$, they must be bounded. Choose $M$ such that $ \left| \text{val}_{\bp}(f(t)/t^r) \right|, \left| \text{val}_{\bp}(g(t)/t^s)\right| \leq M$. We may then take $K_1 = \max(M,\text{val}_{\bp}(c_0),\text{val}_{\bp}(d_0))$.

For $\text{val}_{\bp}(t) < 0$, we then have
\begin{center}
$\text{val}_{\bp}(u) = \epsilon + \text{max}(\lceil -\frac{r}{4} \text{val}_{\bp}(t) \rceil, \lceil -\frac{s}{6} \text{val}_{\bp}(t) \rceil)$,
\end{center}
where $\left|\epsilon\right| \leq K_2 $, for some constant $K_2$ (e.g., $K_2 = 1+\frac{n}{m}K_1$). Since $\text{val}_{\bp}(t) < 0$, we have
\begin{center}
$\text{max}(\lceil -\frac{r}{4} \text{val}_{\bp}(t) \rceil, \lceil -\frac{s}{6} \text{val}_{\bp}(t) \rceil)=\lceil-\frac{n}{m}\text{val}_{\bp}(t)\rceil$.
\end{center}
%We only prove the case where $t \in B(\alpha_i, \delta)$ for some $i$ as the proof where $t \in B(\beta_j \delta)$ for some $j$ is symmetric to this case. Consider the punctured ball $B(\alpha_i, \delta) \setminus \{ \alpha_i \}$ where $f(t) \neq 0$. For this region we may use the proof of Case I to conclude the Lemma where $\left| \epsilon \right| < C_1$ for some constant $C_1$. 
%Because $B(\alpha_i, \delta) \cap B(\beta_j, \delta) = \O$ for all $i$ and $j$, $\left| t - \beta_j \right| \geq \delta$. Thus we are guaranteed that $\text{val}_{\bp}(g(t)/t^s)$ is bounded in this case. Write $\text{val}_{\bp}(g(t)/t^s) \leq R$ for some $R \in \R$. Furthermore, note that $\text{val}_{\bp}(f(t)/t^r), \text{val}_{\bp}(g(t)/t^s)$ are both positive since $\text{val}_{\bp}(t) < 0$. 

%For all $\delta'$ such that $\delta > \delta' > 0$, the Lemma holds for all $t \in B(\alpha_i, \delta) \setminus B(\alpha_i, \delta')$. For each $\delta'$ let $K_{\delta'}$ be such that $\text{val}_{\bp}(u) = \epsilon_{\delta'} + \lceil - \frac{n}{m} \text{val}_{\bp}(t)\rceil$ where $\left| \epsilon_{\delta'} \right| < K_{\delta'}$. Let $C_1 = \sup \{K_{\delta'} : \delta > \delta' > 0, \left| \epsilon_{\delta'} \right| < K_{\alpha} \}$. Note that $C_1$ is finite as $\text{val}_{\bp}(g(t))$ is bounded on $B(\alpha_i, \delta) \setminus B(\alpha_i, \delta')$. 
\textbf{Case II:}
%Because $g$ does not have a root in this ball, $ \left| \lfloor \frac{1}{6}\text{val}_{\bp}(g(t)) \rfloor \right| \leq M$ for some $M \in \R$, and so $-\text{val}_{\bp}(u)$ is always defined. If $ \left|  \lfloor \frac{1}{4}\text{val}_{\bp}(f(t)) \rfloor \right| \leq M$, we then have
%\begin{equation}
    %\text{val}_{\bp}(u) = \epsilon + \text{max}(\lceil -\frac{r}{4} \text{val}_{\bp}(t) \rceil, \lceil -\frac{s}{6} \text{val}_{\bp}(t) \rceil)
%\end{equation}
%where $\left| \epsilon \right| \leq C_1$ for some positive constant $C_1$. 
Without loss of generality, assume $t \in B(\alpha_i, \delta)$. Here, $-\text{val}_{\bp}(u) = \lfloor \frac{1}{6}\text{val}_{\bp}(g(t)) \rfloor$. For all groups $G$ in Table 1, $\frac{r}{4} = \frac{s}{6}$, and so $-\text{val}_{\bp}(u) = \lfloor \frac{1}{6}\text{val}_{\bp}(g(t)) \rfloor = \epsilon + \lfloor \frac{n}{m}\text{val}_{\bp}(t) \rfloor$ where $\left| \epsilon \right| \leq C_2$ for some positive constant $C_2$.

Now we consider $\text{val}_{\bp}(t) \geq 0 $. Let $K_3$ be a constant such that $\text{min}(\text{val}_{\bp}(f(t)), \text{val}_{\bp}(g(t)) \leq K_3 $ for all such $t$, which exists by Lemma 4.4. We may use (4.3) to conclude that $-\text{val}_{\bp}(u) \leq K_4$ for an appropriate $K_4$ (the existence of $K_3$ demonstrates this). Let $K_5$ be so that $\text{val}_{\bp}(f(t)) \geq K_5 $ and $\text{val}_{\bp}(g(t)) \geq K_5 $ for all $t$ with $\text{val}_{\bp}(t) \geq 0 $. Using (4.3) again, we find $-\text{val}_{\bp}(u) \geq K_6$, for an appropriate $K_6$ since we have now a lower bound on $-\text{val}_{\bp}(u)$ with $K_5$. We find that $\left|\text{val}_{\bp}(u)\right| \leq K_7$ in this case, with $K_7 = \text{max}(K_4, K_6)$. 

Combined with the proof of the case $\text{val}_{\bp}(t) < 0$, we have proved the formula in the statement of the lemma, as we can take $C_{\bp}=\text{max}(C_1, C_2, K_2,K_7)$. 

Now suppose $p$ is a sufficiently large integer prime so that the coefficients of $f$ and $g$ are in $\mathbb{Z}_{p}$, the leading coefficients of $f$ and $g$ are elements of $(\Z_{p})^{\times}$, and that $c_{p}=1$ from Lemma 4.4. For $\text{val}_p(t) < 0$ we have $\text{val}_{p}(f(t)) = r\text{ val}_{p}(t)$ and $\text{val}_{p}(g(t)) = s\text{ val}_{p}(t)$, which shows that
\begin{center}
$\text{val}_{p}(u) = \text{max}(\lceil-\frac{r}{4} \text{val}_{p}(t) \rceil, \lceil-\frac{s}{6} \text{val}_{p}(t) \rceil) = \lceil-\frac{n}{m} \text{val}_{p}(t) \rceil $
\end{center}

For $\text{val}_{p}(t) \geq 0 $, we have $\text{val}_{p}(f(t)) \geq 0$ and $\text{val}_{p}(g(t)) \geq 0$, with at least one inequality being an equality (otherwise we contradict Lemma 4.4 with $c_{p}=1$). Thus, $\text{val}_{p}(u) = 0$ and we can take $C_{p}=0$ in this case. 
\end{proof}

Lemma 2.4 of \cite{RHAS} may be used directly and so may the proof. This is because like $\Z$, $\Zi$ has finitely many units. 
\begin{lemma}
There exists a finite set $Q$ of non-zero elements of $\Qi$ with the following property. Suppose $(u,t) \in S_1(X)$. Then we can write $t=a/b^m$, where $a$ and $b$ are Gaussian integers such that gcd$(a,b^m)$ is not divisible by any $m$th power of a non-unit, and $u=qb^n$, with $q \in Q $.
\end{lemma}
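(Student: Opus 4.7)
The plan is to construct $b$ explicitly from $t$ by a prime-by-prime valuation analysis in $\Zi$, define $q = u/b^{n}$, and use Lemma 4.6 to show that $q$ has bounded valuations at every prime and zero valuation at all but finitely many, which forces $q$ into a finite subset of $\Qi^{\times}$.

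First, given $t \in \Qi^{\times}$, I would define $b \in \Zi$ by specifying its valuation at each Gaussian prime $\bp$ as $\text{val}_{\bp}(b) = \max(0, \lceil -\text{val}_{\bp}(t)/m \rceil)$. Since $t$ has nonzero valuation at only finitely many primes, this is a finite product, so $b$ is a well-defined element of $\Zi$ once one fixes a generator in each associate class under $\Zi^{\times}$. Setting $a = t b^{m}$, one checks that $\text{val}_{\bp}(a) = m\,\text{val}_{\bp}(b) + \text{val}_{\bp}(t)$ lies in $[0,m)$ whenever $\text{val}_{\bp}(t)<0$ and equals $\text{val}_{\bp}(t)\geq 0$ otherwise, so $a \in \Zi$ and $\min(\text{val}_{\bp}(a), m\,\text{val}_{\bp}(b))<m$ at every prime. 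This is exactly the condition that $\gcd(a, b^{m})$ is not divisible by any $m$th power of a non-unit. (The degenerate case $t=0$ is handled by $b=1$, $a=0$.)

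Next, I would set $q = u/b^{n} \in \Qi^{\times}$ and compute $\text{val}_{\bp}(q)$ prime by prime using Lemma 4.6. The key arithmetic ingredient is the identity
\[
n \lceil -\text{val}_{\bp}(t)/m \rceil \;=\; \lceil -(n/m)\,\text{val}_{\bp}(t) \rceil,
\]
which holds whenever $n=1$ or $m=1$ but fails in general for coprime $(n,m)$---this is precisely the role of that hypothesis in Proposition 4.2. For primes with $\text{val}_{\bp}(t)<0$, combining Lemma 4.6 with this identity gives $\text{val}_{\bp}(u) - n\,\text{val}_{\bp}(b) = \epsilon_{\bp}$; for primes with $\text{val}_{\bp}(t)\geq 0$ we have $\text{val}_{\bp}(b) = 0$ and Lemma 4.6 again gives $\text{val}_{\bp}(q) = \epsilon_{\bp}$ directly.

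Finally, since $|\epsilon_{\bp}| \leq C_{\bp}$ at every prime and $C_{\bp} = 0$ for all but finitely many $\bp$, the element $q$ has support contained in a fixed finite set of primes and only finitely many possible valuations there; combined with the finiteness of $\Zi^{\times} = \{\pm 1,\pm i\}$, this forces $q$ to lie in a finite subset $Q \subset \Qi^{\times}$ depending only on $f$, $g$, and the constants $C_{\bp}$---not on $(u,t)$ or $X$. The main obstacle I expect is the bookkeeping of the unit ambiguity in the choice of $b$ so that the same $Q$ works uniformly across all $(u,t) \in S_{1}(X)$; the conceptual core, however, is the ceiling identity, whose failure outside the $n=1$ or $m=1$ regime is exactly what confines the method to the groups listed in Table 1.
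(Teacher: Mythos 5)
Your proof is correct and takes essentially the same route as the paper's: both hinge on the uniquely determined (up to units) choice of $b$ with $\text{val}_{\bp}(b) = \max(0, \lceil -\text{val}_{\bp}(t)/m \rceil)$, the valuation formula from Lemma 4.5 (which you appear to cite as ``Lemma 4.6''), and the ceiling identity $n\lceil -\text{val}_{\bp}(t)/m\rceil = \lceil -(n/m)\text{val}_{\bp}(t)\rceil$ under the $n=1$ or $m=1$ hypothesis, which the paper verifies in the equivalent form $\lceil -nk/m\rceil = 0$ for the remainder $k$. Your packaging of that arithmetic step as a clean ceiling identity, and your explicit prime-by-prime construction of $b$, make the argument slightly more transparent than the paper's appeal to unique factorization, but the underlying idea and finiteness deduction for $Q$ are identical.
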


\begin{proof}
Because $\mathbb{Z}[i]$ has unique factorization, we have a unique expression $t=a/b^m$ up to multiplication by a unit, with gcd$(a,b^m)$ not divisible by any $m$th power of a non-unit and $a,b \in \mathbb{Z}[i]$. 

We split into two cases. First, assume that $\bp \mid b$. Then, $-\text{val}_{\bp}(t) = m\text{val}_{\bp}(b)-k$, where $0 \leq k < m $, $k \in \Z$. If $m=1$ then $0 \leq k < 1 $, and since $k$ is integral, $k=0$. If $n=1$ then $ 0 \leq k < m $ becomes $ 0 \leq nk < m$, so $0 \leq \frac{n}{m}k < 1 $. In both cases, $ \lceil - \frac{nk}{m}\rceil = 0$. From Lemma 4.5, we have that  $\text{val}_{\bp}(u) = \epsilon + \lceil - \frac{n}{m} \text{val}_{\bp}(t) \rceil = \epsilon + n \text{val}_{\bp}(b) $, with $\left| \epsilon \right| \leq C_{\bp}$. 

Second, if $ \bp \nmid b$ then $\text{val}_{\bp}(t) \geq 0 $, and so $ \left|\text{val}_{\bp}(u) \right| \leq C_{\bp}$. We thus see that $\left|\text{val}_{\bp}(u/b^n) \right| \leq C_{\bp}$ for $\bp$ by hypothesis of this case. 

Note that from Lemma 4.5, in each case we can take $C_{\bp} = 0$ for all $N(\bp) \gg 0$. This implies that for primes $\bp$ with large enough norm, $\text{val}_{\bp}(u/b^n) = \text{val}_{\bp}(q) = 0 $. Taking units into account, there are 4 representations for a given value of $q$ since there are 4 units in $\mathbb{Z}[i]$. This deduction implies that there are finitely many possibilities for $u/b^n$.
\end{proof}

Lemma 2.5 of \cite{RHAS} may be used directly. For the proof, let the maximum value of $q \in Q$ be the element with the largest norm, which yields an appropriate proof.

\begin{lemma}
Suppose $(u,t) \in S_1(X)$ and write $t=a/b^m$ and $u=qb^n$ as in Lemma 4.6. Then $N(a) \leq C_1X^{m/12n}$ and $N(b) \leq C_2X^{1/12n}$, for some constants $C_1$ and $C_2$.
\end{lemma}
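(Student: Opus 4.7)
The plan is to turn the bound $N(A)^3, N(B)^2 < X$ on the Weierstrass pair into a single ``weighted-height'' estimate on $(a,b)$, from which the separate bounds on $N(a)$ and $N(b)$ emerge. First, I would read off from Table 1 that every group in Case 1 satisfies $r/4 = s/6 = n/m$, i.e., $mr = 4n$ and $ms = 6n$. Setting
\[
F(a,b) := b^{mr} f(a/b^m) = \sum_{k=0}^r c_k\, a^k b^{m(r-k)}, \qquad G(a,b) := b^{ms} g(a/b^m),
\]
where $c_k$ are the coefficients of $f$ (with $c_r \neq 0$), these become polynomials in $(a,b)$ that are weighted-homogeneous of weighted degrees $4n$ and $6n$ respectively, when $a$ carries weight $m$ and $b$ weight $1$. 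Substituting $t = a/b^m$ and $u = qb^n$ into $A = u^4 f(t)$ and $B = u^6 g(t)$ then yields the clean identities $A = q^4 F(a,b)$ and $B = q^6 G(a,b)$.

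The heart of the proof will be the uniform lower bound
\[
\max\bigl(|F(a,b)|^3,\,|G(a,b)|^2\bigr) \;\geq\; c\, H^{12n}, \qquad H := \max\bigl(|a|^{1/m},\,|b|\bigr),
\]
where $|\cdot|$ denotes the archimedean absolute value on $\mathbb{Q}[i]$. I would prove this by splitting on the size of $|t| = |a|/|b|^m$ at a threshold $T_0 \geq 1$. When $|t| \leq T_0$, one has $|b| \leq H \leq T_0^{1/m}|b|$, so $H \asymp |b|$; Lemma 4.4 at the archimedean place (using $c_\infty > 0$) gives $\max(|f(t)|,|g(t)|) \geq c_\infty$, and multiplying through by $|b|^{mr}$ or $|b|^{ms}$ yields $|F(a,b)| \geq c_\infty |b|^{mr}$ or $|G(a,b)| \geq c_\infty |b|^{ms}$; using $3mr = 2ms = 12n$ and $|b| \asymp H$, either alternative produces the claim. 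When $|t| > T_0$, I would choose $T_0$ large enough that the leading monomial $c_r a^r$ dominates the other terms of $F$ by a factor of $2$ (this requires only $T_0 \geq 2r \max_{k<r}|c_k|/|c_r|$), giving $|F(a,b)| \geq (|c_r|/2)|a|^r = (|c_r|/2) H^{4n}$, and cubing yields the claim.

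To conclude, combine the lower bound with $|A|^3 = |q|^{12}|F|^3 \leq X^{1/2}$ and $|B|^2 = |q|^{12}|G|^2 \leq X^{1/2}$ (which follow from $N(A)^3, N(B)^2 < X$ together with $N(z) = |z|^2$). Since $Q$ is a finite set of nonzero elements, $|q|$ is bounded below, so $H^{12n} \lesssim X^{1/2}$, i.e., $H \leq C\, X^{1/(24n)}$. Hence $N(b) = |b|^2 \leq H^2 \leq C_2\, X^{1/(12n)}$ and $N(a) = |a|^2 \leq H^{2m} \leq C_1\, X^{m/(12n)}$, as claimed.

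The main obstacle is the intermediate range of $|t|$, where $|t|$ is only moderately greater than $1$: neither the leading-term asymptotic nor the ``$H \asymp |b|$'' estimate is cleanly in force on its own. The case split with an appropriately chosen threshold $T_0$ is precisely what bridges this range—$T_0$ must be taken large enough for the leading term of $F$ to dominate in the outer regime, yet kept bounded so that $|b|$ and $H$ remain comparable in the inner regime, allowing Lemma 4.4 to still deliver the right scaling.
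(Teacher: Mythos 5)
Your proof is correct, and it takes a recognizably different route from the paper. The paper's argument proceeds in two steps: first it derives $N(u)\cdot\max(N(f(t))^{1/4}, N(g(t))^{1/6}) < X^{1/12}$ directly from $N(A)^3, N(B)^2 < X$, combines this with Lemma~4.4 (which bounds $\max(N(f(t))^{1/4},N(g(t))^{1/6})$ below uniformly) to get $N(u)\le K_1^{-1}X^{1/12}$ and hence the $N(b)$ bound, and then bounds $N(a)$ separately by splitting on $N(t)\ge 1$ (using the leading-coefficient behavior of $f(t)/t^r$, $g(t)/t^s$) versus $N(t)<1$ (where $N(a)<N(b)^m$ immediately). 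You instead homogenize $f,g$ into weighted forms $F,G$ in $(a,b)$, prove a single coercivity estimate $\max(|F|^3,|G|^2)\gtrsim H^{12n}$ with $H=\max(|a|^{1/m},|b|)$, and read off both bounds at once. Two remarks are worth making. First, your version is in fact \emph{more} careful than the paper's at one point: the paper asserts that $K_3^4N(t)^r\le N(f(t))$ holds for all $t$ with $N(t)\ge 1$, which is literally false if $f$ has a root of absolute value $\ge 1$; the correct statement is the $\max$-form inequality, and your two-regime split at the explicit threshold $T_0$ is exactly what repairs this. Second, your argument leans on the identity $r/4=s/6=n/m$ (so that $mr=4n$, $ms=6n$, and $A=q^4F$, $B=q^6G$ come out clean). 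That identity does hold for every group in Table~1, so you are fine for the lemma's actual use, but it is an extra hypothesis relative to the $\max(r/4,s/6)=n/m$ framing of Proposition~4.2, and the paper's proof does not need it; if one wanted the lemma in the fully general setting (say $r/4<s/6$), you would have to rescale $F$ by an extra power of $b$ and the leading-term analysis would need a small adjustment.
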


\begin{proof}
Write $t=a/b^m$ and $u=qb^n$ as above. The inequality max$(N(A)^3, N(B)^2) < X $ gives us
\begin{center}
$N(u) \cdot \text{max}(N(f(t))^{1/4}, N(g(t))^{1/6}) < X^{1/12}\text{.}$
\end{center}
Let $K_1 > 0$ be a constant such that  $\text{max}(N(f(t))^{1/4}, N(g(t))^{1/6}) \geq K_1$ for all $t$, which exists by Lemma 4.4. For instance, one could take the crude estimate $c_{\infty}^{1/6}$ as a lower bound, where $c_{\bp}$ is as in Lemma 4.4. Then $N(u) \leq K_1^{-1}X^{1/12}$, and so
\begin{center}
$N(b) \leq K_2X^{1/12n}$,
\end{center}
with $K_2 = \left|K_1^{-1/n} \right|  \text{max}_{q \in Q} (\left|q^{-1/n}\right|)$, where the maximal element in $Q$ is the one with greatest norm. Note that while there may be multiple such elements, all of those elements differ by multiplication by a unit, and hence have the same norm. This implies the existence of $K_2$. 

Now suppose that $N(t) \geq 1 $. Let $ K_3 > 0 $ be a constant so that $K_3^4N(t)^r \leq N(f(t)) $ and $ K_3^6N(t)^s \leq g(t)$ holds for all such $t$. This exists since the absolute values of $f(t)/t^r$ and $g(t)/t^s$ will be at least the leading coefficients of $f$ and $g$, respectively. By algebraic manipulation, we have
\begin{center}
$X^{1/12} > N(u)\cdot \text{max}(N(f(t))^{1/4}, N(g(t))^{1/6}) \geq K_3N(u) \cdot \text{max}(N(t)^{r/4}, N(t)^{s/6}) = K_3N(u)N(t)^{n/m} = \left|K_3qa^{n/m}\right|$,
\end{center}
We therefore find $N(a) < K_4X^{m/12n}$ with $K_4=\text{max}_{q \in Q}(\left|(K_3q)^{-m/n}\right|)$. Now suppose $N(t) < 1$. Then $NaA) < N(b^m) \leq K_2^mX^{m/12n}$. Thus, in all cases, we have
\begin{center}
$N(a) \leq K_5X^{m/12n}$,
\end{center}
with $K_5=\text{max}(K_4, K_2^m)$. 
\end{proof}

\begin{corollary}
$\left| N_G(X) \right| \leq K_1X^{(m+1)/12n}$ for some positive constant $K_1$.
\end{corollary}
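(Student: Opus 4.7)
The plan is to combine the structural result of Lemma 4.6 with the size bounds of Lemma 4.7 and a standard lattice-point count in $\Zi$. First I would observe that the map $(u,t) \mapsto (u^4 f(t), u^6 g(t))$ sends $S_1(X)$ onto $S(X)$, so it suffices to produce an upper bound for $|S_1(X)|$. By Lemma 4.6, every element $(u,t) \in S_1(X)$ is determined, up to a finite ambiguity coming from the units of $\Zi$, by a triple $(q,a,b) \in Q \times \Zi \times \Zi$ with $t = a/b^m$ and $u = qb^n$.

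Next, I would invoke Lemma 4.7 directly to obtain the size bounds
\[
 N(a) \leq C_1 X^{m/12n}, \qquad N(b) \leq C_2 X^{1/12n}.
\]
The key counting input is the standard lattice-point estimate: the number of $z \in \Zi$ with $N(z) \leq Y$ is $O(Y)$, since these are exactly the lattice points of $\Zi \subset \R^2$ lying inside the disk of radius $\sqrt{Y}$. Applying this estimate to both $a$ and $b$, the number of admissible $a$'s is $O(X^{m/12n})$ and the number of admissible $b$'s is $O(X^{1/12n})$.

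Multiplying by the finite cardinality of $Q$ and absorbing the constant-size unit ambiguity, I would conclude
\[
 |S(X)| \;\leq\; |S_1(X)| \;\leq\; |Q|\cdot O\!\left(X^{m/12n}\right) \cdot O\!\left(X^{1/12n}\right) \;=\; O\!\left(X^{(m+1)/12n}\right),
\]
which yields the constant $K_1$ in the statement. I do not expect a serious obstacle here: all the hard work was done in Lemmas 4.4--4.7. The only point requiring a line of care is checking that the assignment $(q,a,b) \mapsto (A,B)$ is at most finite-to-one, which follows from the essentially-unique factorization $t = a/b^m$ in Lemma 4.6 together with the finiteness of $\Zi^\times$.
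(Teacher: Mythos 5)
Your proposal is correct and follows essentially the same route as the paper: use Lemma 4.6 to reduce counting $S_1(X)$ to counting triples $(q,a,b)$, apply Lemma 4.7 to bound $N(a)$ and $N(b)$, invoke the lattice-point count for Gaussian integers of bounded norm, and use surjectivity of $(u,t)\mapsto(u^4f(t),u^6g(t))$ to transfer the bound to $S(X)$ and hence $N_G(X)$. The paper's proof is much terser (it simply asserts $|S_1(X)|\le K_1X^{(m+1)/12n}$ and cites "previous deductions"); you have supplied exactly the details it leaves implicit, and the closing remark about finite-to-one-ness, while harmless, is not actually needed for the upper bound since only surjectivity onto $S(X)$ is used.
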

\begin{proof}
We have that $\left| S_1(X) \right| \leq K_1X^{(m+1)/12n}$ for a positive constant $K_1$ (e.g., $K_1 = K_2K_6$). This implies the same for $S(X)$ (every pair $(A,B)$ has an associated pair $(u,t)$ where $A = u^4f(t)$ and $B = u^6g(t)$), and, by previous deductions, $N_G(X)$.
\end{proof}

\subsubsection{The Lower Bound}
For a pair $(a,b) \in \Zi^2$, set $u = b^n$, $t = a/b^m$, and further set $A=u^4f(t)$ and $B = u^6g(t)$. Let $S_2(X)$ be the number of pairs $(a,b) \in \Zi^2$ satisfying the following:
\begin{itemize}
    \item $a$ and $b$ are coprime.
    \item $N(a) < \kappa X^{m/12n}$ and $N(b) < \kappa X^{1/12n}$.
    \item $4A^3+27B^2 \neq 0$.
\end{itemize}
Fix $\kappa > 0 $ such that if $(a,b) \in S_2(X)$, then $N(A) < X^{1/3} $ and $N(B) < X^{1/2}$. 
\begin{remark}
Such a $\kappa$ exists because $N(f(t)), N(g(t) \leq M $ (the bounds on $a$ and $b$ would $f$ and $g$ defined in a compact region in $\mathbb{C})$. Note as $\kappa \to 0$, $\sup_{N(b) < \kappa X^{1/12n}} b^n \to 0$. Hence, we may choose $\kappa$ small enough such that we satisfy the above conditions.
\end{remark}

The statement of Lemma 2.6 of \cite{RHAS} is identical for $\Qi$ and replacing the condition $p \gg 0 $ with $\left| \bp \right| \gg 0$ yields an appropriate proof. 

\begin{lemma}
There exists a non-zero Gaussian integer $D$ with the following property: if $(a,b) \in \mathbb{Z}[i] $, then gcd$(A^3, B^2)$ divides $D$.
\end{lemma}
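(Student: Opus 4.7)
The plan is to produce two polynomial identities in $\Zi[a,b]$,
\[ P(a,b) A^3 + Q(a,b) B^2 = D_1 \, b^{M_1}, \qquad P'(a,b) A^3 + Q'(a,b) B^2 = D_2 \, a^{M_2}, \]
for nonzero $D_1, D_2 \in \Zi$ and nonnegative integers $M_1, M_2$. Specializing at any coprime $(a,b) \in \Zi^2$ gives $\gcd(A^3, B^2) \mid D_1 b^{M_1}$ and $\gcd(A^3, B^2) \mid D_2 a^{M_2}$ in $\Zi$. Since every Gaussian prime $\bp$ divides at most one of $a, b$, at each $\bp$ one of the two bounds forces $\text{val}_{\bp}(\gcd(A^3, B^2)) \leq \max(\text{val}_{\bp}(D_1), \text{val}_{\bp}(D_2))$, and hence $D := \text{lcm}(D_1, D_2)$ works.

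The first identity uses that $f^3$ and $g^2$ are coprime in $\Q[t]$ (inherited from coprimality of $f, g$), so a B\'ezout relation cleared of denominators gives $\tilde p(t) f(t)^3 + \tilde q(t) g(t)^2 = D_1$ in $\Zi[t]$ with $D_1 \neq 0$. Substituting $t = a/b^m$, using $A^3 = b^{12n} f(t)^3$ and $B^2 = b^{12n} g(t)^2$ (since $u = b^n$), and multiplying through by a suitable power of $b$ to clear the denominators inside $\tilde p(a/b^m)$ and $\tilde q(a/b^m)$ produces the first identity. For the second, I repeat the construction with the reversed polynomials $\hat f(w) := w^r f(1/w)$ and $\hat g(w) := w^s g(1/w)$; these are coprime in $\Q[w]$ because any common root would be nonzero (the constant terms of $\hat f, \hat g$ are the leading coefficients of $f, g$) and would correspond via reciprocation to a common root of $f, g$. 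Substituting $w = b^m/a$ in the analogous relation $\hat p(w) \hat f(w)^3 + \hat q(w) \hat g(w)^2 = D_2$, using $\hat f(b^m/a) = A/(a^r b^{4n-rm})$ and $\hat g(b^m/a) = B/(a^s b^{6n-sm})$, and clearing denominators with appropriate powers of $a$ and $b$, yields $P'(a,b) A^3 + Q'(a,b) B^2 = D_2 \, a^{M_2} b^L$ with $L = \max(12n - 3rm, 12n - 2sm)$.

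The main obstacle is arranging $L = 0$ so that the right-hand side is a pure power of $a$. The hypothesis $\max(r/4, s/6) = n/m$ only guarantees that the minimum of $12n - 3rm$ and $12n - 2sm$ vanishes, not the maximum. However, inspection of Table 1 shows that every group in Case 1 satisfies $r/4 = s/6 = n/m$ (equivalently $3r = 2s$), which forces both $12n - 3rm$ and $12n - 2sm$ to vanish, so $L = 0$. This structural feature of the Case 1 families is precisely what allows the argument to conclude, and $D = \text{lcm}(D_1, D_2)$ then has the required property.
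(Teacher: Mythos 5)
Your proof is correct (granting the implicit but necessary hypothesis, used tacitly in the paper as well, that $a$ and $b$ are coprime --- the lemma is only ever applied to pairs in $S_2(X)$). However, it takes a genuinely different route from the paper's. The paper's proof is \emph{local}: for each Gaussian prime $\bp$ it bounds $\mathrm{val}_{\bp}(\gcd(A^3,B^2))$ directly, splitting into the same cases as Lemma~4.5 (whether $\mathrm{val}_{\bp}(t)$ is negative, and whether $t$ lies $\bp$-adically near a root of $f$ or of $g$), shows via compactness and ultrametric estimates that the bound is $0$ for $N(\bp)\gg 0$, and then assembles $D=\prod_{\bp}\bp^{e_{\bp}}$. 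Your proof is \emph{global} and purely algebraic: a B\'ezout relation for the coprime pair $(f^3,g^2)$, specialized at $t=a/b^m$ and cleared of denominators, gives $\gcd(A^3,B^2)\mid D_1 b^{M_1}$, while a second relation built from the reversed polynomials $\hat f,\hat g$ and specialized at $w=b^m/a$ gives $\gcd(A^3,B^2)\mid D_2 a^{M_2}$; coprimality of $(a,b)$ then caps $\mathrm{val}_{\bp}(\gcd(A^3,B^2))$ by $\max(\mathrm{val}_{\bp}(D_1),\mathrm{val}_{\bp}(D_2))$ at every prime simultaneously, and $D=\mathrm{lcm}(D_1,D_2)$ works. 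What your route buys is the elimination of the $\bp$-adic case analysis and the compactness/continuity arguments entirely, replacing them with a single resultant-type computation. What it costs is the explicit use of the structural identity $3r=2s$ (equivalently $r/4=s/6=n/m$) to kill the stray factor $b^L$ on the right of the second identity --- a point you flag honestly and verify against Table~1 --- whereas the paper's local argument for this particular lemma is phrased for general coprime $f,g$ with $\max(r/4,s/6)=n/m$. Since every Case~1 family in fact satisfies $3r=2s$, this is no loss here, and your version has the expository merit of isolating exactly where that arithmetic coincidence is used.
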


\begin{proof}
We find constants $e_{\bp}$ such that $\text{val}_{\bp}(\text{gcd}(A^3, B^2)) \leq e_{\bp}$ and $e_{\bp} = 0$ for $N(\bp) \gg 0 $. This would mean we can take $D= \prod_{\bp}\bp^{e_{\bp}}$, where the product is over all primes $\bp$ in $\mathbb{Z}[i]$. By construction, $D$ satisifies the desired conditions.

We first consider the case where $t$ satisfies Case I of Lemma 4.5. Suppose $(a,b) \in S_2(X) $, and let $\bp$ be a prime. Let $K_1$ be a constant such that 
\\
$\left|3\text{val}_{\bp}(f(t)) - 3r \text{val}_{\bp}(t) \right| \leq K_1$ and $\left|2\text{val}_{\bp}(g(t)) - 2s \text{val}_{\bp}(t) \right| \leq K_1$ for $t \in \mathbb{Q}[i] $ with $\text{val}_{\bp}(t) < 0 $, which exists by the discussion in the proof of Lemma 4.5. Note that for $N(\bp) \gg 0$, we can take $K_1=0$. Suppose that $\text{val}_{\bp}(b) = k > 0$ so that $\text{val}_{\bp}(t)=-mk$ and $\text{val}_{\bp}(u) = nk $. Then,
\begin{center}
$\text{val}_{\bp}(A^3) = \text{val}_{\bp}(u^{12}f(t)^3) = 12nk-3rmk+\epsilon = 12m(\frac{n}{m}-\frac{r}{4})k+\epsilon$
\end{center}
and
\begin{center}
$\text{val}_{\bp}(B^2) = \text{val}_{\bp}(u^{12}g(t)^2) = 12nk-2smk+\delta = 12m(\frac{n}{m}-\frac{s}{6})k+\delta$
\end{center}
where $\left|\epsilon\right| \leq K_1$ and $\left|\delta\right| \leq K_1$. However, $\frac{n}{m}=\text{max}(\frac{r}{4}, \frac{s}{6})$, and so we have $\text{val}_{\bp}(\text{gcd}(A^3, B^2)) \leq K_1 $. We may take $K_1 = 0$ for $\bp$ of large enough norm.

We now handle Case II where $\text{val}_{\bp}(t) < 0$. Without loss of generality, suppose $t \in B(\alpha_i, \delta)$ for some $i$ and $\delta > 0$. Because $A = u^4f(t)$ and $B = u^6g(t)$ are continuous in $t$, $A$ and $B$ are bounded in the closure of $B(\alpha_i, \delta)$. Hence, there exists $K_2$ such that $\text{val}_{\bp}(\gcd (A^3, B^2)) \leq K_2$. For $\bp$ of large enough norm, $\text{val}_{\bp} g(t) \leq 0$ since the coefficients of $g$ will be $\bp$-adic units.  Hence, we may take $\text{val}_{\bp}(u) = 0$, and thus $\text{val}_{\bp}(B^2) = 0$. Thus, we may take $K_2 = 0$ for $\bp$ of large enough norm.

Now suppose $\text{val}_{\bp}(t) \geq 0 $. Let $K_2$ be a constant so that $\text{max}(3\text{val}_{\bp}(f(t)), 2\text{val}_{\bp}(g(t))) \leq K_2$ for all $t \in \mathbb{Q}[i] $ with $\text{val}_{\bp}(t) \geq 0 $. This constant exists by Lemma 4.4. Moreover, we can take $K_2=0$ for sufficiently large $\bp$. Since $ \bp \nmid b $, this gives $\text{val}_{\bp}(\text{gcd}(A^3, B^2)) \leq K_3 $. Then we may take $e_{\bp} = \max (K_1, K_2, K_3)$, and this is 0 for $\bp$ of large enough norm.

% Then, Now suppose that $f(t) = 0$ and $g(t) \neq 0$. Then gcd$(A^3, B^2) = B^2$. But since $\left| B^2 \right| < X$, for all $\bp \in \Zi$ there must exist $K_3$ such that $\text{val}_{\bp}(B^2) \leq K_3$. Similarly, there must exist $K_4$ such that $\text{val}_{\bp}(A^3) \leq k_4$. We may then take $e_{\bp} = \max(K_1, K_2, K_3, K_4)$.
\end{proof}
For the next two lemmas we introduce another set. Let $S_3(X)$ denote the set of pairs $(A,B) \in \mathbb{Z}[i]^2 $ coming from $S_2(X)$. We have a map $S_3(X) \rightarrow S(X) $ which sends $(A,B)$ to $(A/d^4, B/d^6)$, where $d^{12}$ is the largest 12th power dividing $\text{gcd}(A^3, B^2)$. Then, the statement of \cite[Lemma 2.7]{RHAS} may also remain unchanged, and letting $d \in \Zi$ provides sufficient justification.

\begin{lemma}
There exists a finite positive constant $N$ such that every fiber of the map $S_3(X) \rightarrow S(X)$ has cardinality at most $N$.
\end{lemma}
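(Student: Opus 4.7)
The plan is to exploit Lemma 4.9, which provides a fixed nonzero Gaussian integer $D$ such that $\gcd(A^3, B^2) \mid D$ for every $(A,B) \in S_3(X)$, and then convert this divisibility bound into a uniform count of admissible rescalings. Fix $(A', B') \in S(X)$ in the image. Any preimage $(A, B) \in S_3(X)$ is of the form $(A, B) = (d^4 A', d^6 B')$, where by construction $d^{12}$ is the largest 12th power dividing $\gcd(A^3, B^2)$. Since $(A', B') \in S(X)$ is minimal, $\gcd((A')^3, (B')^2)$ contains no 12th power of a non-unit, so the factorization $\gcd(A^3, B^2) = d^{12}\,\gcd((A')^3, (B')^2)$ identifies $d$ precisely as the 12th-power part of the left-hand side, unique up to a 12th root of unity in $\Zi$.

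Next I would count the admissible $d$. By Lemma 4.9, $d^{12} \mid D$. Using unique factorization in $\Zi$, write $D = \prod_{\bp} \bp^{e_{\bp}}$, where only finitely many $e_{\bp}$ are nonzero. Then $d = u \prod_{\bp} \bp^{k_{\bp}}$ for some unit $u \in \Zi^{\times}$ and exponents $0 \leq k_{\bp} \leq \lfloor e_{\bp}/12 \rfloor$. The number of such $d$ is bounded above by $N := 4 \prod_{\bp}(\lfloor e_{\bp}/12 \rfloor + 1)$, a finite quantity depending only on $D$. Since each admissible $d$ yields at most one pair $(d^4 A', d^6 B')$ in the fiber, and the bound $N$ is independent of $X$ and of the chosen $(A', B')$, the fiber cardinality is at most $N$ uniformly.

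There is no serious obstacle beyond the careful accounting above; the heavy lifting has already been done by Lemma 4.9. The only subtlety to track is that $d$ is only determined by $d^{12}$ up to a 12th root of unity in $\Qi$, and since the 12th roots of unity in $\Qi$ coincide with the units $\Zi^{\times} = \{\pm 1, \pm i\}$, this ambiguity contributes the harmless factor of $4$ in $N$. No further case analysis is required, and the lemma follows.
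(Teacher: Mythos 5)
Your proof takes essentially the same route as the paper's: parametrize the fiber over $(A_0,B_0)$ by Gaussian integers $d$ with $(A,B)=(d^4A_0,d^6B_0)$, invoke the preceding lemma (the existence of $D$) to obtain $d^{12}\mid D$, and conclude that the fiber size is bounded by the number of admissible $d$. The paper compresses the final step to ``the number of 12th powers dividing $D$,'' whereas you expand the bound explicitly via unique factorization in $\Zi$ and the unit group $\Zi^{\times}$; the logical substance is identical.
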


\begin{proof}
Let $(A_0, B_0) \in S(X) $ be given. Suppose $(A,B) \in S_3(X) $ maps to $(A_0, B_0) $. That is, $A=d^4A_0$ and $B=d^6B_0$ for some $d \in \mathbb{Z}[i] $. By Lemma 4.8, $d^{12} \mid D$. Therefore, if $N$ is the number of 12th powers dividing $D$, the fibers have cardinality at most $N$.
\end{proof}

The statement and proof of Lemma 2.8 hold over $\Qi$.

\begin{lemma}
There exists a positive finite constant $M$ such that every fiber of the map $S_2(X) \rightarrow S_3(X)$ has cardinality at most $M$.
\end{lemma}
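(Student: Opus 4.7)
The plan is to bound the fiber of $S_2(X) \to S_3(X)$ over a point $(A,B) \in S_3(X)$ in two steps: first, recover $t = a/b^m$ from $(A,B)$ up to finitely many choices; second, recover the coprime pair $(a,b)$ from $t$ up to Gaussian unit ambiguity.

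First, I would observe that if $(a,b) \in S_2(X)$ maps to $(A,B)$, then $A = b^{4n} f(t)$ and $B = b^{6n} g(t)$ with $t = a/b^m$, so the $j$-invariant of the Weierstrass curve satisfies
\begin{equation*}
 j(A,B) \;:=\; 1728 \cdot \frac{4A^3}{4A^3+27B^2} \;=\; 1728 \cdot \frac{4f(t)^3}{4f(t)^3+27g(t)^2} \;=:\; J(t).
\end{equation*}
The rational function $J \in \Q(t)$ is non-constant for every group $G$ in Case 1, since the corresponding universal family $\mathcal{E}_t$ is non-isotrivial; indeed, the degrees $r,s$ in Table 1 are strictly positive and $f,g$ are coprime, which ensures $J$ genuinely varies with $t$. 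Letting $d = \deg J$, the equation $J(t) = j(A,B)$ has at most $d$ solutions in $\Qi$.

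Second, for each such $t$, I would show the pair $(a,b)$ is determined up to multiplication by a Gaussian unit. Suppose $t = a/b^m = a'/(b')^m$ with both expressions in the lowest form of Lemma 4.6. Then $a(b')^m = a'b^m$, combined with the coprimality of $a$ with $b^m$ and of $a'$ with $(b')^m$, forces $(b')^m = \epsilon b^m$ for some unit $\epsilon \in \Zi^{\times}$. Thus $b'/b$ is an element of $\Qi$ whose $m$-th power is a root of unity, so $b'/b$ is itself a root of unity in $\Qi$, hence an element of $\Zi^{\times} = \{\pm 1, \pm i\}$. This gives at most 4 pairs $(a,b)$ for each $t$, with $a' = \zeta^m a$ whenever $b' = \zeta b$.

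Combining the two bounds, every fiber of $S_2(X) \to S_3(X)$ has cardinality at most $M := 4d$, which is finite and independent of $X$. The one point requiring genuine case-checking is confirming that $\deg J$ is finite, i.e., that $J$ is non-constant for each $G$ in Case 1; this follows from the explicit universal families recorded in \cite{FPR} and is visible already from the fact that each row of Table 1 has $r \geq 2$ and $s \geq 3$ with $f$ and $g$ coprime.
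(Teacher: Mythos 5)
Your proof is correct but takes a genuinely different route from the paper's. The paper's argument is a single application of B\'ezout's theorem: an element $(a,b)$ of the fiber over $(A,B) \in S_3(X)$ yields a common solution $(x,y) = (b^{-n}, a/b^m)$ to the two plane curves $Ax^4 = f(y)$ and $Bx^6 = g(y)$, which have degrees $\max(4,r)$ and $\max(6,s)$ and no common component since $f,g$ are coprime, giving the bound $M = \max(4,r)\max(6,s)$. Your argument instead decouples the recovery of $(a,b)$ into two steps: pin $t$ to one of at most $\deg J$ values via the $j$-invariant (the non-isotriviality you invoke does indeed reduce to the elementary observation that $f,g$ coprime of positive degree forces $f^3/g^2$ non-constant), then recover $(a,b)$ from $t$ up to a unit of $\Zi$, giving $M = 4\deg J$. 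Both are valid and yield explicit finite bounds; the paper's is shorter and requires nothing about the universal family beyond coprimality, while yours makes the source of the finiteness more transparent (finite fibers of the $j$-map over the genus-zero modular curve, plus finiteness of $\Zi^{\times}$). A minor simplification in your second step: for a given $t$ and the fixed $(A,B)$, the equation $b^{4n} = A/f(t)$ (or $b^{6n} = B/g(t)$ when $f(t)=0$; they cannot both vanish since $f,g$ are coprime) already determines $b$ up to a root of unity in $\Zi$, so you need not pass through the coprime-representation argument for $t$ at all.
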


\begin{proof}
Let $(A,B) \in S_3(X)$ be given. An element $(a,b) \in S_2(X)$ of the fiber gives a solution to the equations $Ax^4 = f(y)$ and $Bx^6 = g(y)$ by $x=u^{-1}=b^{-n}$ and $y=t=a/b^m$. These two equations define algebraic plane curves of degree $\text{max}(4,r)$ and $\text{max}(6, s) $, which have no common components since $f$ and $g$ are coprime. By B\'ezout's theorem, they have at most $M=\text{max}(4,r) \text{max}(6,s) $ points in common, which is finite and positive. This bounds the cardinality of the fibers.
\end{proof}

\begin{lemma}
The probability $P$ that two Gaussian integers $a$ and $b$ are coprime exists.
\end{lemma}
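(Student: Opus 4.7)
The plan is to establish existence of the natural density
\[
P := \lim_{X \to \infty} \frac{\#\{(a,b) \in \Zi^2 : N(a), N(b) < X, \ \gcd(a,b) = 1\}}{\#\{(a,b) \in \Zi^2 : N(a), N(b) < X\}},
\]
and, along the way, to identify it as $P = 1/\zeta_{\Qi}(2)$, where $\zeta_{\Qi}$ is the Dedekind zeta function of $\Qi$. This is the direct analog of the classical Dirichlet computation $1/\zeta(2) = 6/\pi^2$ over $\Z$, and the argument transports to $\Zi$ with only notational changes, since $\Zi$ is a UFD and a full-rank lattice in $\mathbb{C}$ of covolume $1$.

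First I would apply M\"obius inversion on associate classes of Gaussian integers: using unique factorization, define $\mu$ so that $\sum_{d \mid n} \mu(d) = [n \in \Zi^{\times}]$. Setting $M_d(X) := \#\{a \in \Zi : d \mid a,\, N(a) < X\}$, the numerator above equals $\sum_{d} \mu(d) M_d(X)^2$. Second, a Gauss-circle estimate, applied to the sublattice $d\Zi$ of covolume $N(d)$ in $\mathbb{C}$ intersected with the disk $\{z : N(z) < X\}$ of area $\pi X$, gives $M_d(X) = \pi X/N(d) + O(\sqrt{X/N(d)})$ for $N(d) \leq X$, and analogously $\#\{a : N(a) < X\} = \pi X + O(\sqrt{X})$. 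Third, squaring $M_d(X)$, summing against $\mu(d)$, and invoking the Euler product $\sum_{d} \mu(d) N(d)^{-2} = \prod_{\bp}(1 - N(\bp)^{-2}) = 1/\zeta_{\Qi}(2)$ along with absolute convergence of $\zeta_{\Qi}(s)$ for $\mathrm{Re}(s) > 1$, the main term becomes $\pi^2 X^2 / \zeta_{\Qi}(2)$. Dividing by the total count $\pi^2 X^2 + O(X^{3/2})$ yields $P(X) = 1/\zeta_{\Qi}(2) + O(X^{-1/2})$, which proves both existence and the value.

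The only delicate step is the uniform control of the remainders after summing over $d$. Expanding the square produces a cross term of size $O(X^{3/2}/N(d)^{3/2})$, which sums to $O(X^{3/2} \zeta_{\Qi}(3/2)) = O(X^{3/2})$ since $\zeta_{\Qi}$ converges at $s = 3/2$, plus a term $O(X)\sum_{N(d) \leq X} N(d)^{-1} = O(X \log X)$; both are $o(X^2)$. The M\"obius tail $\sum_{N(d) > X} \mu(d) N(d)^{-2} = O(1/X)$ (by Abel summation against the ideal-counting function of $\Zi$) contributes only $O(X)$ to the main sum. No input beyond elementary convergence of $\zeta_{\Qi}(s)$ on $\mathrm{Re}(s) > 1$ is needed; this is precisely Mertens's classical elementary argument carried out in the Gaussian setting.
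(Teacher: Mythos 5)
Your argument is correct, and it takes a genuinely different route from the paper. The paper's proof of this lemma is a one-line citation of the fact that the density equals $1/\zeta_{\Qi}(2)$, with no derivation; you instead give a self-contained proof via M\"obius inversion over associate classes of Gaussian integers, a lattice-point (Gauss circle) estimate for the sublattice $d\Zi \subset \mathbb{C}$, and the Euler product for $\zeta_{\Qi}(2)$. Your approach has the advantage of establishing existence and the value simultaneously, with an explicit $O(X^{-1/2})$ error term, using nothing beyond unique factorization in $\Zi$ and convergence of the Dedekind zeta function on $\mathrm{Re}(s) > 1$; the paper's citation is shorter but opaque. Your remainder analysis is sound: the cross term sums to $O(X^{3/2})$ because $\zeta_{\Qi}(3/2)$ converges, the squared-error term is $O(X\log X)$, and the M\"obius tail contributes $O(X)$, all of which are $o(X^2)$. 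Two cosmetic points: the sum over $d$ must be taken over one representative per associate class (equivalently, over nonzero ideals of $\Zi$), as you indicate, so that $\mu$ and $M_d(X)$ are well-defined; and the degenerate pairs with $a = 0$ or $b = 0$, for which $\gcd$ requires separate interpretation, contribute only $O(\sqrt{X})$ and do not affect the density.
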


\begin{proof}
This probability is equal to $1/\zeta_{\Qi}(2)$, where $\zeta_{\Qi}(2)$ is the Dedekind zeta function of $\Qi$ evaluated at 2. \cite{GCJJ}
\end{proof}
It remains to use the conditions for elements of $S_2(X)$, which we use here.
\begin{corollary}
$K_1X^{(m+1)/12n} \leq \left| N_G(X) \right|$ for some positive constant $K_1$.
\end{corollary}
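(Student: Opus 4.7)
The plan is to combine the fiber bounds of Lemmas 4.10 and 4.11 with a direct lattice-point count for $|S_2(X)|$, and then translate the resulting lower bound on $|S(X)| = N_G^{2'}(X)$ back to $N_G(X)$ via the relations established in Section 3. Composing the maps $S_2(X) \to S_3(X) \to S(X)$, Lemmas 4.10 and 4.11 show that each fiber of $S_2(X) \to S(X)$ has cardinality at most $MN$, hence
\[
|S(X)| \;\geq\; \frac{|S_2(X)|}{MN}.
\]
So it suffices to show $|S_2(X)| \geq C \cdot X^{(m+1)/12n}$ for some $C > 0$ and all sufficiently large $X$.

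To bound $|S_2(X)|$ from below, I would count lattice points in $\Zi^2$ satisfying the three defining conditions of $S_2(X)$. The standard circle-counting estimate gives that the number of Gaussian integers $a$ with $N(a) < \kappa X^{m/12n}$ is $\pi \kappa X^{m/12n} + O(X^{m/24n})$, and likewise for $b$, so the total count of pairs $(a,b)$ satisfying the norm bounds is $\pi^2 \kappa^2 X^{(m+1)/12n}$ to leading order. By Lemma 4.11, a positive proportion $1/\zeta_{\Qi}(2)$ of these pairs are coprime, and a standard Möbius-inversion argument over $\Zi$ (using that $\Zi$ is a PID) makes this precise as $X \to \infty$. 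Finally, the condition $4A^3 + 27B^2 \neq 0$ excludes only those $(a,b)$ for which the polynomial identity $4f(t)^3 + 27g(t)^2 = 0$ holds at $t = a/b^m$; since $f$ and $g$ are coprime and the universal curve is nonsingular generically, $4f^3 + 27g^2$ is a nonzero polynomial in $t$ of bounded degree, so the excluded $t$-values are finite in number and the corresponding $(a,b)$ contribute $O(X^{1/12n})$, which is strictly lower order than $X^{(m+1)/12n}$ whenever $m \geq 1$.

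Putting these three ingredients together, for all sufficiently large $X$,
\[
|S_2(X)| \;\geq\; \frac{\pi^2 \kappa^2}{2 \zeta_{\Qi}(2)} \, X^{(m+1)/12n},
\]
and dividing by $MN$ gives the desired bound on $|S(X)| = N_G^{2'}(X)$. To pass from $N_G^{2'}(X)$ to $N_G(X)$, invoke the sandwich in (3.2), the observation in Lemma 3.4 that $d(G) < d(H)$ for $G \subsetneq H$ (so the subtracted terms are lower order), Lemma 4.1 (which absorbs the $B = 0$ discrepancy into an $O(1)$), and the factor of $\tfrac{1}{2}$ from accounting for the two minimal Weierstrass representations noted in Section 2. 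Each of these steps preserves the leading asymptotic, so $N_G(X) \geq K_1 X^{(m+1)/12n}$ follows.

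The main obstacle I anticipate is the coprimality count: Lemma 4.11 only asserts existence of the density, so one has to verify that the convergence is uniform enough to yield a two-sided asymptotic for the count of coprime pairs in an expanding box, rather than just a density statement. The standard way to do this is a sieve via Möbius over ideals of $\Zi$, bounding the tail $\sum_{N(\mathfrak{d}) > Y} |\mu(\mathfrak{d})| / N(\mathfrak{d})^2$ by $O(Y^{-1})$ and choosing $Y$ as a small power of $X$; the contribution from each $\mathfrak{d}$ reduces to counting $a' = a/\mathfrak{d}$ and $b' = b/\mathfrak{d}$ in proportionally smaller boxes. Beyond this sieve bookkeeping, everything else is a routine verification that the excluded loci (the discriminant vanishing, and the $B = 0$ locus) are lower-dimensional and hence negligible.
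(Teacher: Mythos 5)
Your approach mirrors the paper's proof: bound $|S_2(X)|$ from below by a lattice-point count under the norm conditions, invoke the coprimality density of Lemma 4.11 and the negligibility of the discriminant locus, and then descend to $|S(X)|$ via the bounded fibers of $S_2(X)\to S_3(X)\to S(X)$. You treat the M\"obius sieve over $\Zi$ and the final passage from $N_G^{2'}(X)$ back to $N_G(X)$ more explicitly than the paper does, but note that the fiber-cardinality bounds you need are Lemmas 4.9 and 4.10, not 4.10 and 4.11 (the latter is the coprimality density statement).
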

\begin{proof}
We first make use of the condition that $N(a) < \kappa X^{(m+1)/12n}$ and $N(b) < \kappa X^{1/12n}$. There are at most $\kappa X^{(m+1)/12n}$ possible norms for $a$ and at most $\kappa X^{1/12n}$ possible norms for $b$. Thus, there are at most $\kappa^2 X^{(m+1)/12n} +X^{m/12n} + X^{1/12n} + 1$ pairs $(N(a), N(b)$ that satisfy this condition. Because there are finite number of Gaussian integers with fixed norm, there exists a positive constant $C_1$ such that there are at most $C_1(\kappa^2 X^{(m+1)/12n} +X^{m/12n} + X^{1/12n}) + O(1)$ pairs $(a,b)$ in $S_2(X)$. Then, by Lemma 4.11 we know that $\left| S_2(X) \right| \geq P \cdot (C_1X^{(m+1)/12n} +X^{m/12n} + X^{1/12n})$. Since $\lim_{X \to \infty} \frac{X^{\alpha}}{X^{(m+1)/12n}} = 0$ for $0 < \alpha < \frac{m+1}{12n}$, as $X \to \infty$ we have that $C_2X^{(m+1)/12n} \leq \left| S_2(X) \right|$ for some positive cosntant $C_2$.

Finally, we use the condition on $A$ and $B$ that $4A^3 + 27B^2 \neq 0 $. We exclude such $ (a,b)$ that give $ 4A^3 + 27B^2 = 0 $. Note that this equation gives us an algebraic curve in $\mathbb{Q}[i]^2$, whereas $ (a,b) \in \mathbb{Q}[i]^2 $. The plane $\Qi ^ 2$ has greater dimension than the algebraic curve $ 4A^3 + 27B^2 = 0 $. Thus, as $X \to \infty$, the number of ordered pairs $(A,B)$ excluded will be of lower order. We may conclude that $C_4X^{(m+1)/12n} \leq \left| S_2(X) \right|$ for some positive constant $C_4$. Since the map $ S_2(X) \rightarrow S(X) $ has fibers of bounded size,  $MNC_4X^{(m+1)/12n} \leq \left| S(X) \right|$. Thus, the lower bound has been established and Proposition 4.2 has been proved. Hence we have proven Theorem 2.3 for the groups listed in Case 1, $G = \Z/M\Z, \Z/2\Z \times \Z/2K\Z, \Z/4\Z \times \Z/4\Z$, $M \in [4,10]$, $M=12$, $2 \leq K \leq 4$, with $d(G) = 12n/(m+1)$.
\end{proof}
From the definition of $S_2(X)$, we know it contains $C_1\kappa^2X^{(m+1)/12n} + O(\kappa) $ elements, with  $C_1$ a positive constant and $ O(\kappa)$ an error term of lower magnitude that accounts for possible ordered pairs $(a,b)$ on the boundary of the relevant region in $\mathbb{Z}[i]^2$. We now use the condition that $a$ and $b$ are coprime.

We have that $\left| S_2(X) \right| \geq P(C_2X^{(m+1)/12n} + O(\kappa)) $. Since the error term has much lower order compared to $\kappa^2X^{(m+1)/12n}$, $C_1X^{(m+1)/12n} \leq \left| S_2(X) \right|$ as $X \rightarrow \infty $, for some positive constants $C_2$ and $C_3$.

% \begin{table}[H]
%\centering
 % \begin{tabular}{ | l | l |}
  %  \hline
   % $G$ & $ d(G) $ \\ \hline
    %$\Z/2\Z \times \Z/10\Z $ & 18\\ \hline
    %$\Z/2\Z \times \Z/12\Z $ & 24\\ \hline
    %$\Z/4\Z \times \Z/4\Z $ & 12 \\ \hline
 % \end{tabular}
  %\end{table}

\subsection{Case 2}
Recall the groups that will be dealt with in this case: $G = \Z/M\Z$, $M = 13, 16, 18$, is parameterized by $\mathcal{E}_{s,t}$ with $(s,t) \in C(\Qi)$, where $C$ is a plane curve of genus $>1$. We have the following data for the parameter spaces of the universal equations of groups $G$ in this case, following from \cite[\S 4]{FPR} and some manipulation:
\begin{table}[H]
\centering
    \begin{tabular}{ | l | l |}
   \hline
    $G$ & $ C(s,t) $ \\ \hline
    $\Z/13\Z $ & $s^2  = t^6-2t^5+t^4-2t^3+6t^2-4t+1$ \\ \hline
    $\Z/16\Z $ & $s^2  = t(t^2+1)(t^2+2t-1)$ \\ \hline
    $\Z/18\Z $ & $s^2 = t^6+2t^5+5t^4+10t^3+10t^2+4t+)$ \\ \hline
    \end{tabular}
\end{table}
The curves $C$ that form each parameter space have genus greater than 1. Furthermore the curves are irreducible. Thus, by Faltings' Theorem (\cite{Faltings1983}) there are a finite number of points $s,t \in \Qi$ such that $(s,t) \in C(\Qi)$, and so for each group $G$ in this case, there are finitely many curves that have torsion subgroup $G$. Therefore, we have that $d(G) = \infty$.
\subsection{Case 3}
Recall the groups that will be dealt with in this section: $G = \Z/M\Z, \Z/2\Z \times \Z/10\Z, \Z/2\Z \times \Z/12\Z $, $M = 11, 14, 15 $. For the groups $G$ that fall into this case, the parameter spaces $C(s,t) = 0$ are elliptic curves. We have the following data for the Mordell-Weil groups of these curves $C$, following from \cite[\S 4]{FPR} and some manipulation:
\begin{table}[H]
\centering
    \begin{tabular}{ | l | l |}
   \hline
    $G$ & Mordell-Weil group for $C(\Qi)$ \\ \hline
    $\Z/11\Z $ & $\Z/5\Z$ \\ \hline
    $\Z/14\Z $ & $\Z/6\Z$ \\ \hline
    $\Z/15\Z $ & $\Z/4\Z$ \\ \hline
    $\Z/2\Z \times \Z/10\Z$ & $\Z/6\Z$ \\ \hline
    $\Z/2\Z \times \Z/12\Z$ & $\Z/8\Z$ \\ \hline
    \end{tabular}
\end{table}
This data clearly demonstrates that for each group $G$ in this case, the corresponding group $C(\Qi)$ is finite, and thus there are a finite number of elliptic curves with torsion subgroup isomorphic to $G$. This establishes $d(G) = \infty$ for this case.

\section{The Group $\Z/2\Z \times \Z/2\Z$}

For the group $\Z/2\Z \times \Z/2\Z$, we may begin with an analogue of \cite[Proposition 4.2]{RHAS}:
\begin{prop}
Let $ f, g \in \mathbb{Q}[t] $ be coprime polynomials of degrees $ r $ and $s$. Assume at least one of $r$ or $s$ is positive. Write 
\begin{center} 
max$(\frac{r}{4}, \frac{s}{6}) = \frac{n}{m}$
\end{center}
with $n$ and $m$ coprime. Assume $n=1$ or $m=1$. Let $S(X)$ be the set of pairs $(A,B) \in \mathbb{Z}[i]^2$ satisfying the following conditions: 

\begin{itemize}
\item $4A^3+27B^2 \neq 0$.
\item $\text{gcd}(A^3, B^2)$ is not divisible by any 12th power of a non-unit.
\item $N(A) < X^{1/3}$ and $N(B) < X^{1/2}$.
\item There exist $ u, t \in \mathbb{Q}[i]$ such that $A=u^2f(t)$ and $B=u^3g(t)$.
\end{itemize}
Assume $m+1>n$. Then $K_1X^{(m+1)/6n} \leq |S(X)| \leq K_2X^{(m+1)/6n}$ for some positive constants $K_1$ and $K_2$.
\end{prop}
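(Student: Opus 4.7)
The plan is to adapt the proof of Proposition 4.2 line by line, replacing the parameterization $(A,B)=(u^4f(t),u^6g(t))$ by $(A,B)=(u^2f(t),u^3g(t))$ and tracking how halving the power of $u$ propagates through every step. The four-part architecture carries over: (i) a $\bp$-adic analysis pinning down $\mathrm{val}_{\bp}(u)$ in terms of $\mathrm{val}_{\bp}(t)$ up to bounded error; (ii) a rational-normal-form decomposition $t=a/b^m$, $u=qb^{2n}$; (iii) conversion of the height bound to norm bounds on $a,b$; and (iv) a lower-bound construction that reverses this using the coprimality density together with B\'ezout's theorem for the fiber-size estimates.

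For the upper bound, define $S_1(X)=\{(u,t)\in\Qi^2:(u^2f(t),u^3g(t))\in S(X)\}$. The integrality conditions become $2\,\mathrm{val}_{\bp}(u)+\mathrm{val}_{\bp}(f(t))\geq 0$ and $3\,\mathrm{val}_{\bp}(u)+\mathrm{val}_{\bp}(g(t))\geq 0$; combined with the $12$th-power minimality condition, the argument of Lemma 4.5 yields, for $\mathrm{val}_{\bp}(t)<0$,
\[
\mathrm{val}_{\bp}(u) = \epsilon + \Bigl\lceil -\tfrac{2n}{m}\,\mathrm{val}_{\bp}(t)\Bigr\rceil,
\]
since $\max(r/2,s/3)=2\max(r/4,s/6)=2n/m$, with $|\epsilon|$ bounded uniformly and vanishing for $N(\bp)\gg 0$. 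The analog of Lemma 4.6 then writes $t=a/b^m$ canonically with $\gcd(a,b^m)$ free of $m$th powers and $u=qb^{2n}$ for $q$ in a finite set. The height bound rearranges to $N(u)\cdot\max(N(f(t))^{1/2},N(g(t))^{1/3})<X^{1/6}$, whose exponent is $1/6$ rather than the $1/12$ of Lemma 4.7, because the total power of $u$ in the height is now $6$; substituting the normal form and counting as in Corollary 4.12 produces $|S(X)|\leq K_2X^{(m+1)/(6n)}$.

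For the lower bound, I mirror \S 4.1: fix $\kappa>0$ sufficiently small and let $S_2(X)$ be the set of coprime pairs $(a,b)\in\Zi^2$ with the appropriate norm bounds and $4A^3+27B^2\neq 0$, where $(A,B)=(b^{4n}f(a/b^m),b^{6n}g(a/b^m))$. The three fiber-bounding lemmas transfer essentially unchanged: Lemma 4.8 produces a constant $D$ controlling $\gcd(A^3,B^2)$ by the same case analysis; Lemma 4.9 bounds the fibers of $S_3(X)\to S(X)$ by the number of $12$th-power divisors of $D$; and Lemma 4.10 bounds the fibers of $S_2(X)\to S_3(X)$ via B\'ezout applied to the curves $Ax^2=f(y)$ and $Bx^3=g(y)$. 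Combined with the positive coprimality density $1/\zeta_{\Qi}(2)$ from Lemma 4.11 and the observation that $4A^3+27B^2=0$ cuts out a locus of strictly smaller dimension than $\Qi^2$, this yields $|S(X)|\gg X^{(m+1)/(6n)}$.

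The main obstacle is the $\bp$-adic behavior of $u$ at primes where $\mathrm{val}_{\bp}(t)\geq 0$. In Proposition 4.2 the minimality condition tightens to $\mathrm{val}_{\bp}(u)<1$ at such primes and therefore pins $\mathrm{val}_{\bp}(u)=0$, letting the set $Q$ of Lemma 4.6 be finite. Here the analogous bound is only $\mathrm{val}_{\bp}(u)<2$, leaving $\mathrm{val}_{\bp}(u)\in\{0,1\}$ and apparently introducing a squarefree degree of freedom in $u$. The hypothesis $m+1>n$ in the proposition is precisely what is needed to absorb this freedom: it ensures that in the analog of Lemma 4.6 the set $Q$ can genuinely be taken finite, and that the auxiliary sum over the squarefree part of $u$ converges to a constant rather than contributing a larger power of $X$. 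Matching this upper-bound count against the lower-bound construction to verify that both have order exactly $X^{(m+1)/(6n)}$ is the delicate technical step of the proof.
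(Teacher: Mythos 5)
Your plan correctly identifies the overall architecture, but the proof has a genuine gap at precisely the point the paper's argument introduces something new, and your last paragraph reveals confusion about what that new ingredient is. The crux is this: from $A=u^2f(t)$, $B=u^3g(t)$ one gets $A^3=u^6f^3$, $B^2=u^6g^2$, so the constraint $\mathrm{val}_\bp(\gcd(A^3,B^2))<12$ only forces $\mathrm{val}_\bp(u)<2+\max(-\mathrm{val}_\bp f/2,-\mathrm{val}_\bp g/3)$, leaving \emph{two} admissible values of $\mathrm{val}_\bp(u)$ at \emph{every} prime, not just at primes where $\mathrm{val}_\bp(t)\geq 0$ as you assert. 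Consequently the Lemma~4.6 analogue cannot produce $u=qb^{\bullet}$ with $q$ in a finite set; it produces $u=qcb^{\bullet}$ with $q$ finite and $c$ an arbitrary square-free Gaussian integer, which genuinely varies. Your claim that the hypothesis $m+1>n$ ``ensures that the set $Q$ can genuinely be taken finite'' is simply false and contradicts your own next clause, which acknowledges a sum over the square-free part. The paper's proof writes $u=qcb^n$ (with $c$ square-free), converts the height bound into estimates of the shape $N(c)\,N(a)^{\bullet}\lesssim X^{1/6}$ and $N(c)\,N(b)^{\bullet}\lesssim X^{1/6}$, counts pairs $(a,b)$ for each fixed $c$, and then \emph{sums over} $c$ up to $X^{1/6}$; the hypothesis $m+1>n$ is exactly what makes $\sum_c N(c)^{-(m+1)/n}$ converge so that this sum contributes only a constant. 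You flag this as ``the delicate technical step'' without carrying it out, but it is the entire content of the proposition beyond Proposition~4.2, and without it the bound $K_2X^{(m+1)/(6n)}$ does not follow from your intermediate estimates.

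A secondary but real issue: you derive $\mathrm{val}_\bp(u)\approx 2n\,\mathrm{val}_\bp(b)$ and write $u=qb^{2n}$, whereas the paper's proof writes $u=qcb^{n}$. These are reconcilable only if $n/m$ in the statement is interpreted as the reduced form of $\max(r/2,s/3)$ rather than the printed $\max(r/4,s/6)$; the paper's proof and its application in \S 5 (where $r=2$, $s=3$ leads to ``$m=n=1$'') indicate that the printed $\max(r/4,s/6)$ is a typo. Working from the statement at face value you would instead land on an exponent with $12n$ rather than $6n$ in the denominator, and you should at least have noticed that your normal form $u=qb^{2n}$ and your claimed conclusion $X^{(m+1)/(6n)}$ are not dimensionally compatible without an extra factor coming from the $c$-sum. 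Similarly, for the lower bound you fix $c=1$ as the paper does, but the paper also records that the Lemma~4.8 analogue changes: $\gcd(A^3,B^2)=\alpha^6\beta$ with $\alpha$ square-free and $\beta\mid D$, rather than $\gcd(A^3,B^2)\mid D$ outright; this modification is needed for the fiber-counting step and is missing from your sketch.
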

\begin{proof}
The proof is similar to the proof of Proposition 4.2. A version of Lemma 4.5 holds, but $C_{\bp} = 1$ for $N(\bp) \gg 0$. A version of Lemma 4.6 holds where we can write $t = a/b^m$ with gcd$(a,b^m)$ not divisible by any $m$th power of a non-unit and $u = qcb^n$ where $q$ belongs to a finite set and $c$ is square-free. The analog of Lemma 4.7 yields $|ca^{m/n}| \leq K_3X^{1/6}$ and $|cb^n| \leq K_4X^{1/6}$ for some positive constants $K_3$ and $K_4$. For any given $c$, the number of possibilities for $a$ is $O(\frac{X^{m/6n}}{c^{m/n}})$ and the number of possibilities for $b$ is $O(\frac{X^{1/6n}}{c^{1/n}})$, yielding $O(\frac{X^{(m+1)/6n}}{c^{(m+1)/n}})$ total possibilities. Integrate over $c$, up to $X^{1/6}$, to get $|S(X)| \leq K_2X^{(m+1)/6n}$ for some positive constant $K_2$.

For the lower bound, fix $c = 1$. Let $\kappa > 0$ be a small constant, and consider the set $S = \{ (a,b) \in \Zi^2: |a^{n/m}|<\kappa X^{1/6}, |b^n| < \kappa X^{1/6}, \Delta \neq 0 \}$. Then, $|S(X)| \leq K_4X^{(m+1)/6n}$ for some positive constant $K_4$. The analog of Lemma 4.8 says that gcd$(A^3, B^2) = \alpha^6 \beta$, where $\beta \mid D$ for some fixed $D \in \Zi$, and furthermore $\alpha$ is square-free. We can then apply Lemmas 4.9-4.11 to obtain the lower bound.
\end{proof}
We may deduce from the parameterizations given in \cite{FPR} that the polynomials of the universal family for $\Z/2\Z \times \Z/2\Z$ are $f(t) = \frac{1}{3}(t^2-t+1)$ and $g(t) = \frac{1}{27}(-2t^3+3t^2+3t-2)$. Then, $m = n = 1$ and so $(m+1)/6n = 1/3$. From this we obtain that $d(\Z/2\Z \times \Z/2\Z) = 3$.

%\begin{table}[H]
%\centering
 % \begin{tabular}{ | l | l |}
  %  \hline
   % $G$ & $ d(G) $ \\ \hline
    %$\Z/2\Z \times \Z/2\Z $ & 3\\ \hline
    %$\Z/10\Z $ & 18\\ \hline
    %$\Z/12\Z $ & 24 \\ \hline
  %\end{tabular}
  %\end{table}
  
\appendix

\section{Computation of Degrees of $f$ and $g$ in \S 4}
We use the parameterizations given in \cite{FPR}, which gives parameterizations in the form $y^2+axy+by=x^3+dx^2$. The parameters $a$ and $b$ are given by the quotients of polynomials. Let $a=\frac{a(t)}{c_1(t)}$, $b = \frac{b(t)}{c_2(t)}$, and $d=\frac{d(t)}{c_3(t)}$. We make a change of variable $(x,y) \mapsto (\frac{x}{u(x)^2}, \frac{y}{u(x)^3})$ to clear the denominators of $a$, $b$ and $d$. Denote these transformed polynomials as $a_1(t)$, $b_1(t)$, and $d_1(t)$. We now convert this form into Weierstrass form and compute the degree. 

After transforming this given into the desired form, we compute the maximum of the degree of each term in the expression for the polynomials $f$ and $g$. From here, let $a = a_1(t)$, $b = b_1(t)$, and $d = d_1(t)$.

The transformations are as follows:
\[ y^2+axy+by = x^3+dx^2 \]
\[\implies (y+ \frac{ax+b}{2})^2 = x^3+dx^2 + (\frac{ax+b}{2})^2. \]
Let $ y + \frac{ax+b}{2} \mapsto y $. Then,
\[ y^2 = x^3+(\frac{a^2}{4} + d)x^2  + \frac{abx}{2} + \frac{b^2}{4}.\]
\[ \implies y^2 = (x+ (\frac{a^2}{12} + \frac{d}{3}))^3 -\frac{d^2x}{3}-\frac{a^4x}{48}-\frac{a^2dx}{6}+\frac{abx}{2}-\frac{a^6}{1728}-\frac{a^4d}{144}-\frac{a^2d^2}{36}-\frac{d^3}{27}+\frac{b^2}{4}. \]

\noindent Let $ x+ \frac{a^2}{12}+\frac{d}{3} \mapsto x$. Then,
\[y^2 = x^3 - (\frac{d^2}{3}+\frac{a^4}{48} + \frac{a^2d}{6}-\frac{ab}{2})(x-\frac{d}{3}-\frac{a^2}{12})-\frac{a^6}{1728}-\frac{a^4d}{144}-\frac{a^2b^2}{36}-\frac{b^3}{27}+\frac{b^2}{4} \]
\[ \implies y^2=x^3-(\frac{d^2}{3}+\frac{a^4}{48}+\frac{a^2d}{6}-\frac{ab}{2})x + (\frac{d}{3} + \frac{a^2}{12})(\frac{d^2}{3}+\frac{a^4}{48}+\frac{a^2d}{6}-\frac{ab}{2})-\frac{a^6}{1728}-\frac{a^4d}{144}-\frac{a^2d^2}{36}-\frac{d^3}{27}+\frac{b^2}{4}.\]
We now let $f(t)$ be the coefficient of $x$ and $g(t)$ the constant term. The degrees of $f$ and $g$ were found using MATLAB.
\bibliographystyle{amsalpha}
\bibliography{sources.bib}

\newcommand{\etalchar}[1]{$^{#1}$}
\providecommand{\bysame}{\leavevmode\hbox to3em{\hrulefill}\thinspace}
\providecommand{\MR}{\relax\ifhmode\unskip\space\fi MR }
% \MRhref is called by the amsart/book/proc definition of \MR.
\providecommand{\MRhref}[2]{%
  \href{http://www.ams.org/mathscinet-getitem?mr=#1}{#2}
}
\providecommand{\href}[2]{#2}
\begin{thebibliography}{Kim01}

\bibitem[CJ88]{GCJJ}
George~E Collins and Jeremy~R Johnson, \emph{{T}he {P}robability of {R}elative
  {P}rimality of {G}aussian {I}ntegers}, International Symposium on Symbolic
  and Algebraic Computation, Springer, 1988, pp.~252--258.

\bibitem[Fal83]{Faltings1983}
G.~Faltings, \emph{Endlichkeitssätze für abelsche varietäten über
  zahlkörpern.}, Inventiones mathematicae \textbf{73} (1983), 349--366 (ger).

\bibitem[HS13]{RHAS}
Robert Harron and Andrew Snowden, \emph{Counting {E}lliptic {C}urves with
  {P}rescribed {T}orsion}, Crelles Journal (2013), 1--16.

\bibitem[K{\etalchar{+}}86]{KM}
Sheldon Kamienny et~al., \emph{Torsion {P}oints on {E}lliptic {C}urves {O}ver
  {A}ll {Q}uadratic {F}ields}, Duke Mathematical Journal \textbf{53} (1986),
  no.~1, 157--162.

\bibitem[Kim01]{IK}
Ian Kiming, \emph{Group law for elliptic curves}, 2000-2001.

\bibitem[KM88]{KUMO}
Monsur~A Kenku and Fumiyuki Momose, \emph{Torsion {P}oints on {E}lliptic
  {C}urves {D}efined {O}ver {Q}uadratic {F}ields}, Nagoya Mathematical Journal
  \textbf{109} (1988), 125--149.

\bibitem[Rab10]{FPR}
Patrick Rabarison, \emph{Structure de {T}orsion des {C}ourbes {E}lliptiques sur
  les {C}orps {Q}uadratiques}, Acta Arithmetica (2010), 17--52.

\end{thebibliography}
\end{document}